\documentclass[11pt, a4paper]{article}

\usepackage{epsfig}
\usepackage{amssymb}
\usepackage{amsthm}
\usepackage{amsmath}
\usepackage{float}
\usepackage{multicol}
\usepackage{multirow}
\usepackage{cite}
\usepackage{enumerate}
\usepackage{xcolor}
\usepackage{tabu}

\def\ms{\medskip}
\def\nt{\noindent}

\definecolor{vividviolet}{rgb}{0.62, 0.0, 1.0}

\def\Z{\mathbb Z}
\def\di{\displaystyle}
\newtheoremstyle{de}
  {10pt}          
  {10pt}  
  {\rm}  
  {}
  {\bf}  
  {. }    
  { }    
  {}     
\theoremstyle{de}

\newtheorem{problem}{Problem}[section]

\newtheoremstyle{theorem}
  {10pt}          
  {10pt}  
  {\it}  
  {}
  {\bf}  
  {. }    
  { }    
  {}     
\theoremstyle{theorem}

\newtheorem{theorem}{Theorem}[section]
\newtheorem{lemma}[theorem]{Lemma}

\newtheorem{corollary}[theorem]{Corollary}

\newtheorem{conjecture}{Conjecture}[section]

\numberwithin{equation}{section}
\def\Z{\mathbb{Z}}

\def\a{\alpha}
\def\b{\beta}
\def\s{\sigma}
\def\r{\rho}

\def\n{\nu}

\begin{document}
\begin{center}
{\mathversion{bold}\Large \bf A novel approach to determining chromatic number induced by labelings}

\bigskip
{\large  Gee-Choon Lau$^{a}$, Wai Chee Shiu$^b$,  Zhen Bin Gao$^{c,}$}\footnote{the corresponding author}

\medskip

\emph{{$^a$}77D, Jalan Suboh, 85000 Segamat, Johor, Malaysia}\\
\emph{geeclau@yahoo.com}\\

\medskip
\emph{{$^b$}Department of Mathematics,}\\
\emph{The Chinese University of Hong Kong,}\\
\emph{Shatin, Hong Kong, P.R. China.}\\
\emph{wcshiu@associate.hkbu.edu.hk}\\

\medskip
\emph{{$^c$}College of General Education,}\\
\emph{Guangdong University of Science and Technology,}\\
\emph{Dongguan, 523083, P.R. China.}\\
\emph{gaozhenbin@aliyun.com}\\



\end{center}
\begin{abstract}
Given a simple graph $G=(V,E)$ of order $p$ and size $q$, a bijection $f : V\cup E \to \{1, 2, \ldots, p+q\}$ is a local total neighborhood antimagic labeling of $G$ if the induced vertex coloring has the property $f^+_{tn}(u) \ne f^+_{tn}(v)$ for every two adjacent vertices $u$ and $v$ where $f^+_{tn}(u) = \sum (f(ux) + f(x))$  over every neighbor $x$ of $u$. The local total neighborhood antimagic chromatic number of $G$, denoted $\chi_{ltna}(G)$ is the minimum number of distinct induced colors over all local total neighborhood antimagic labeling of $G$. In this paper, we determine the local total neighborhood antimagic chromatic number of the join of graphs with distinct parity orders.

\ms
\noindent\textbf{2020 Mathematics Subject Classification:} 05C78, 05C15.

\noindent
\textbf {Keywords:} Local total neighborhood antimagic chromatic number, structured matrix constructions, join product.

 \end{abstract}

\section{Introduction}

Let $G=(V,E)$ be a simple $(p,q)$-graph of order $p$ and size $q$. A bijection $f : E \to \{1,2,\ldots, q\}$ is an {\it antimagic} labeling of $G$ if all the vertices have distinct induced vertex weights where the weight of a vertex $u$ is given by $w(u) = \sum f(e)$ with $e$ ranging over all the edges incident to $u$ \cite{H+R}.  The famous unsolved problems are as follows.

\begin{conjecture} All connected graphs except $K_2$ are antimagic. \end{conjecture}

\begin{conjecture} All trees except $K_2$ are antimagic. \end{conjecture}

\nt In~\cite{Arumugam}, the authors introduced the concept of {\it local antimagic} labeling that allows non-adjacent vertices to have the same induced vertex weights. Thus, $G$ is local antimagic if there is a bijective labeling $f: E \to \{1,2,\ldots,q\}$ such that $w(u) \ne w(v)$ for every edge $uv$ of $G$. Clearly, this labeling induces a proper vertex coloring. Therefore,  Arumugam et al. called the minimum number of distinct vertex colors over all local antimagic labelings of $G$ as the local antimagic chromatic number of $G$, denoted $\chi_{la}(G)$. Interested readers may refer to~\cite{Lau+Ng+Shiu, Lau+Shiu+Nal+Zhang+Prem, Lau+Shiu+Ng-IJMSI} for many related results. In particular, one may refer to~\cite{Lau+Prem+Shiu+Nal, Lau+Shiu+Ng-DMGT, Lau+Shiu-AMH, Lau+Shiu-AC, Lau+Shiu-CN, Lau+Shiu-UM} for the local antimagic chromatic number of the join of graphs.

\nt In~\cite{S+P, S+P+P}, the study of total neighborhood antimagic labeling was initiated. As a natural variation, a bijective total labeling $f : V \cup E \to \{1,2,\ldots,p+q\}$ is called a {\it local total neighborhood antimagic labeling} of $G$ if $f$ induces a vertex coloring $f^+_{tn} : V \to \Z$, where $f^+_{tn}(u) = \sum\limits_{ux\in E} [f(ux) + f(x)]$. If $u$ is an isolated vertex, we let $f^+_{tn}(u) = 0$.
We say $G$ is {\it local total neighborhood antimagic} if it admits a {\it local total neighborhood antimagic labeling} $f$~\cite{GLSY}.

\ms\nt We say $f^+_{tn}(u)$ is the induced color of $u$ under $f$. For a local total neighborhood antimagic labeling $f$, the number of distinct induced colors is denoted $c_{tn}(f)$  so that $f$ is also a local total neighborhood antimagic $c_{tn}(f)$-coloring of $G$.  The {\it local total neighborhood antimagic chromatic number} of $G$, denoted $\chi_{ltna}(G)$, is
\[\min\{c_{tn}(f) \;|\; f \mbox{ is a local total neighborhood antimagic labeling of } G\}.\]


\ms\nt Thus, throughout this paper, we only focus on the graph $G$ which is a simple graph of order $p\ge 2$ without isolated vertices and admits a local total neighborhood antimagic labeling $f$, unless stated otherwise.  Moreover,
\begin{equation}\label{eq-obvious}
\chi_{ltna}(G)\ge \chi(G) \ge 2.
\end{equation} Similar to the local antimagic conjecture, we also have

\begin{conjecture} Every graph admits a local total neighborhood antimagic labeling. \end{conjecture}

\nt For integers $a < b$, let $[a,b] = \{a, a+1, \dots, b\}$. The join of graphs $G$ and $H$ is denoted $G\vee H$ with $V(G\vee H) = V(G) \cup V(H)$ and $E(G\vee H) = E(G) \cup E(H) \cup \{uv \mid u\in V(G), v\in V(H)\}$. For a given vertex $x$ of $G$, let $d_G(x)$ (or $d(x)$ if no ambiguity) be the degree of $x$. Interested readers may refer to~\cite{GLSY} for some general bounds and exact local total neighborhood antimagic chromatic number of some standard graphs. In this paper, we give  structured matrix constructions that allows us the determine the local total neighborhood antimagic chromatic number of the join of graphs with distinct parity orders. 


\section{Join graphs with orders of distinct parity}

\nt Suppose $G$ is a $(p,q)$-graph for even $p\ge 2$. Consider the following conditions:
\begin{enumerate}[(a)]
\item For $t\ge 2$, $G$ is a $t$-partite graph with partite sets $V_l, 1\le l\le t$, such that $|V_l| = s_l$ with $s_{l+1} \ge s_l\ge 1$.
\item $G$ admits a bijective total labeling $g : V(G) \cup E(G) \to [1,p+q]$ with induced vertex color of $u$ given by $g^+_{tn}(u) = \sum\limits_{x\in N(u)} [g(ux) + g(x)]$ such that (i) if $s_l=1$, then the only weight in $V_l$ is $\a_l$, and (ii) if $s_l\ge 2$, then all the weights in $V_l$ form an arithmetic sequence with first term $\a_l$ and common difference $d_l$.
\end{enumerate}

\nt {\bf Condition~(A): } For even $p\ge 2$ and odd $m\ge 3$, there exists a $p\times m$ matrix $A_{p\times m}$ such that the set of its entries is $[1,pm]$ and every column sum is a constant $c = p(pm+1)/2$, and all the row sums can be partitioned into $t$ sets $S_l$, $1\le l\le t$, of size $s_l$ such that (i) if $s_l = 1$, then the only element in $S_l$ is $\b_l$, and (ii) if $s_l\ge 2$, then all the elements in $S_l$ form an arithmetic sequence with first term $\b_l$ and common difference $-d_l$.

\begin{theorem}\label{thm-GVOm-1} Suppose $G$ is a $(p,q)$-graph with $V(G) = \{u_i\;|\; 1\le i\le p\}$  satisfying conditions (a), (b) and (A) above. If for $1\le l<l'\le t$,
\begin{enumerate}[(i)]
\item$\a_l + \b_l \ne \a_{l'} + \b_{l'}$, and
\item $\a_l + \b_l + m(2p+2q+pm) + m(m+1)/2 \ne \sum\limits^p_{i=1} g(u_i) + p(pm+1)/2 + p(p+q)$,
\end{enumerate}
then $\chi_{ltna}(G \vee O_m) \le t+1$. The equality holds if $\chi(G)=t$.
\end{theorem}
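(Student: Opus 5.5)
The plan is to label $G\vee O_m$ directly by combining the labeling $g$ from condition~(b) with the matrix $A$ from condition~(A). Let $V(O_m)=\{v_j \mid 1\le j\le m\}$. The graph $G\vee O_m$ has $p+m$ vertices and $q+pm$ edges, so it needs the labels $[1,p+q+m+pm]$. I would use them in three blocks:
\begin{enumerate}[(1)]
\item $f=g$ on $V(G)\cup E(G)$, which uses $[1,p+q]$;
\item $f(u_iv_j)=p+q+a_{ij}$, where $a_{ij}$ is the $(i,j)$-entry of a row-permuted copy of $A_{p\times m}$, which uses $[p+q+1,p+q+pm]$;
\item $f(v_j)=p+q+pm+j$, which uses the remaining $m$ labels.
\end{enumerate}
Every label is used exactly once, so $f$ is a bijection. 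The one real choice is which row of $A$ goes to which vertex $u_i$.

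For that choice, write $r_i$ for the sum of the row assigned to $u_i$. Since $N(u_i)$ in $G\vee O_m$ is $N_G(u_i)\cup V(O_m)$,
\[f^+_{tn}(u_i)=g^+_{tn}(u_i)+m(p+q)+r_i+\sum_{j=1}^m (p+q+pm+j)=g^+_{tn}(u_i)+r_i+m(2p+2q+pm)+\tfrac{m(m+1)}{2}.\]
So the task is to make $g^+_{tn}(u_i)+r_i$ constant on each partite set $V_l$. By~(b), the weights in $V_l$ are $\a_l,\a_l+d_l,\dots,\a_l+(s_l-1)d_l$. By~(A), the set $S_l$ has the same size $s_l$ and consists of $\b_l,\b_l-d_l,\dots,\b_l-(s_l-1)d_l$. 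I assign to the vertex of $V_l$ with weight $\a_l+kd_l$ the row whose sum is $\b_l-kd_l$; when $s_l=1$ there is a single vertex and a single row. Then every vertex of $V_l$ gets color $\a_l+\b_l+m(2p+2q+pm)+m(m+1)/2$. Hypothesis~(i) makes these $t$ colors pairwise distinct, and this separates adjacent vertices of $G$, since they lie in different partite sets.

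For $v_j$, the neighbourhood is $V(G)$, so
\[f^+_{tn}(v_j)=\sum_{i=1}^p\bigl(p+q+a_{ij}\bigr)+\sum_{i=1}^p g(u_i)=p(p+q)+\frac{p(pm+1)}{2}+\sum_{i=1}^p g(u_i),\]
using that every column sum of $A$ is $c=p(pm+1)/2$. Permuting rows does not change column sums, so this value is the same for all $j$. The $v_j$ are pairwise nonadjacent, so a common color is allowed. Each $v_j$ is adjacent to every $u_i$, and hypothesis~(ii) says exactly that this common color differs from each of the $t$ colors $\a_l+\b_l+m(2p+2q+pm)+m(m+1)/2$. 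Hence $f$ is a local total neighborhood antimagic labeling with $t+1$ colors, which gives $\chi_{ltna}(G\vee O_m)\le t+1$.

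For the equality, joining an independent set to $G$ raises the chromatic number by exactly one, so $\chi(G\vee O_m)=\chi(G)+1=t+1$ when $\chi(G)=t$. Then~(\ref{eq-obvious}) gives $\chi_{ltna}(G\vee O_m)\ge t+1$. The only step needing care is the row assignment: the opposite common differences $d_l$ and $-d_l$ in~(b) and~(A) are exactly what make each partite set monochromatic. Everything else is bookkeeping of the label blocks.
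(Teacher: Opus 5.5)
Your proposal is correct and follows the paper's proof essentially line for line. It uses the same three-block labeling and the same matching of the vertex with weight $\alpha_l+kd_l$ to the row with sum $\beta_l-kd_l$; your row-permuted copy of $A$ is exactly the paper's ``without loss of generality'' indexing. The two color computations and the appeal to $\chi(G\vee O_m)=\chi(G)+1$ for equality are also the same. Like the paper, you read hypothesis (ii) as holding for every $l\in[1,t]$. That is clearly the intended meaning, even though the stated quantifier $l<l'$ formally leaves out $l=t$.
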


\begin{proof} Let $G$ be as defined above. Suppose $F = G\vee O_m$. Let $V(O_m) = \{v_j\;|\; 1\le j\le m\}$. Define a bijective total labeling $f : V(F) \cup E(F) \to [1,p+q+pm+m]$ such that
\begin{enumerate}[(1)]
\item $f(x) = g(x)$ for $x\in V(G)\cup E(G)$,
\item $f(u_iv_j) = a_{i,j} + p+q$, where $a_{i,j}$ is the $(i,j)$-entry of $A_{p\times m}$, $1\le i\le p$, $1\le j\le m$,
\item $f(v_j) = p+q+pm + j$, $1\le j\le m$.
\end{enumerate}
For $l\ge 2$, let $\s_l = s_1+s_2+\cdots + s_{l-1}$ and $\s_0=0$. Without loss of generality, we let $g^+_{tn}(u_{\s_l+k}) = \a_l+(k-1)d_l$ for $1\le l\le t$, $1\le k\le s_l$, i.e., $u_{\s_l+k}$ is the $k$-th vertex in $V_l$ that corresponds to the $k$-th term of the corresponding arithmetic sequence under the labeling $g$ in (b). Similarly, for $1\le l\le t$, $1\le k\le s_l$, we let $\r_{\s_l+k}$ be the $(\s_l+k)$-th row sum of $A_{p\times m}$ that corresponds to the $k$-th term of the arithmetic sequence with first term $\b_l$ and common difference $-d_l$. Thus, $\r_{\s_l+k} = \b_l-(k-1)d_l$.
Since $f(x) = g(x)$ for $x\in V(G)$, we have
\begin{align*}
f^+_{tn}(u_{\s_l+k}) & = g^+_{tn}(u_{\s_l+k}) +  \r_{\s_l+k} + m(p+q) + \sum^m_{j=1} f(v_j)\\
       &= g^+_{tn}(u_{\s_l+k}) +  \r_{\s_l+k} + m(p+q) + m(p+q+pm) + m(m+1)/2\\
      & = \a_l + \b_l + m(2p+2q+pm) + m(m+1)/2, \quad 1\le k\le s_l.\\
f^+_{tn}(v_j) & = \sum^p_{i=1} g(u_i) + p(pm+1)/2 + p(p+q).
\end{align*}
\nt Therefore, all the vertices in $V_l$ have the same vertex color $\a_l + \b_l + m(2p+2q+pm) + m(m+1)/2$. By (i), we have $f^+_{tn}(u_{\s_l+k}) \ne f^+_{tn}(u_{\s_{l'}+k'})$ for all $1\le l< l'\le t$,  $1\le k\le s_l$, $1\le k' \le  s_{l'}$. By (ii), we have $f^+_{tn}(u_{\s_l+k}) \ne f^+_{tn}(v_j)$ for $1\le l\le t$ and $1\le j \le m$. Thus, $f$ is a local total neighborhood antimagic $(t+1)$-coloring of $F$ so that $\chi_{ltna}(F) \le t+1$. Since $\chi_{ltna}(F) \ge \chi(F) = \chi(G)+1$, the equality holds if $\chi(G) = t$.
\end{proof}

\nt We shall now define a suitable $p\times m$ array to determine the exact $\chi_{ltna}(G\vee O_m)$ for some standard graphs $G$. For $p = 2r$, $m = 2s+1$, $r,s\ge 1$, let $M_k$, $1\le k\le s$, be the following $2r \times 2$ array with rows $R_1$ to $R_{2r}$, and columns $C_{2k-1}, C_{2k}$.
\[\fontsize{9}{13}\selectfont
\begin{tabu}{|c|[1pt]c|c|}\hline
      &   C_{2k-1}   &  C_{2k}   \\\tabucline[1pt]{-}
R_1 & 2r(k-1)+1 & 2r(2s-k+1)+r   \\\hline
R_2 & 2r(k-1)+2 & 2r(2s-k+1)+r-1   \\\hline
\vdots & \vdots & \vdots  \\\hline
R_r & 2r(k-1)+r  &  2r(2s-k+1)+1  \\\tabucline[1pt]{-}
R_{r+1}  & 2r(2s-k+2)   & 2r(k-1)+r+1\\\hline
R_{r+2} & 2r(2s-k+2) -1     &  2r(k-1)+r+2 \\\hline
\vdots & \vdots & \vdots  \\\hline
R_{2r}  &  2r(2s-k+2)-r+1    &  2r(k-1)+2r   \\\hline
\end{tabu}\]

\nt Let $M = \begin{pmatrix}  M_1  & M_2 & \cdots &M_s \end{pmatrix}$ with $(i,j)$-entry $m_{i,j}$. We now have the following observations.
\begin{enumerate}[({M}1)]
\item Set of entries in $M_k$ is $[2r(k-1)+1, 2rk]\cup [2r(2s-k+1)+1, 2r(2s-k+2)]$. Thus, set of entries in $M$ is $[1, 2rs] \cup [2r(s+1)+1, 2r(2s+1)]$.
\item Each column of $M$ has constant column sum $r[2r(2s+1)+1] = r(4rs+2r+1)$.
\item For $1\le i\le r$, $m_{i,2k-1}+m_{i,2k} = 2r(2s)+r+1$ and $m_{r+i,2k-1} + m_{r+i,2k} = 2r(2s+1)+r+1$. Thus, the $i$-th (and $(r+i)$-th) row sum of $M$ is a constant $s[2r(2s)+r+1] = s(4rs+r+1)$ (and $s[2r(2s+1)+r+1] = s(4rs+3r+1)$).
\end{enumerate}

\nt For $1\le i\le r$, suppose $A_1$ is a $2r\times 1$ array with the $i$-th entry $2rs+2i-1$, and the $(r+i)$-th entry $2rs+2i$. Clearly, $A_1$ has column sum $r(4rs+2r+1)$ as $M$. Let $A = \begin{pmatrix} M & A_1\end{pmatrix}$ be a $2r \times (2s+1)$ array. For $1\le k\le s$, $1\le i\le r$, the following properties hold.
\begin{enumerate}[({A}1)]
\item There is a one-to-one correspondence between entries of $A$ and $[1,2r(2s+1)]$.
\item $A$ has constant column sum $r(4rs+2r+1)$.
\item From (M3), the $i$-th row of $A$ has row sum $s(4rs+r+1) + 2rs+2i-1 = s(4rs+3r+1)+2i-1$.  Therefore, these sums form an arithmetic sequence with first term $\b_1 = s(4rs+3r+1)+1$ and common difference $-d_1=2$.
\item From (M3), the $(r+i)$-th row of $A$ has row sum $ s(4rs+3r+1) + 2rs+2i = s(4rs+5r+1)+ 2i$. Therefore, these sums form an arithmetic sequence with first term $\b_2 =s(4rs+5r+1)+ 2$ and common difference $-d_2=2$.
\end{enumerate}

\nt For $1\le i\le 2r$, suppose $B_1$ is a $2r \times 1$ array with the $i$-th entry $2rs+i$. Clearly, $B_1$ also has column sum $r(4rs+2r+1)$ as $A_1$ and $M$. Let $B = \begin{pmatrix} M & B_1\end{pmatrix}$ be a $2r \times (2s+1)$ array. For $1\le k\le s$, $1\le i\le r$, we have the following observations.
\begin{enumerate}[({B}1)]
\item There is a one-to-one correspondence between entries of $B$ and $[1,2r(2s+1)]$.
\item $B$ has constant column sum $r(4rs+2r+1)$.
\item From (M3), the $i$-th row of $B$ has row sum $s(4rs+r+1) + 2rs+i = s(4rs+3r+1)+i$. Therefore, these sums form an arithmetic sequence with first term $\b_1 = s(4rs+3r+1)+1$ and common difference $-d_1=1$.
\item From (M3), the $(r+i)$-th row of $B$ has row sum $s(4rs+3r+1) + 2rs+ r+i = s(4rs+5r+1)+r+i$. Therefore, these sums form an arithmetic sequence with first term $\b_2 = s(4rs+5r+1)+r+1$ and common difference $-d_2=1$.
\end{enumerate}
\nt Note that when $r=1$, we get \[A=B=\begin{pmatrix}1 & 4s+1 & 3 & 4s-1  & \cdots & 2s-1 & 2s+3 & 2s+1\\
4s+2 & 2 & 4s & 4 & \cdots & 2s+4 & 2s & 2s+2
\end{pmatrix}\]
The first and second row sums are $s(4s+2)+2s+1 = (2s+1)^2$ and $s(4s+4)+2s+2 =(2s+1)(2s+2)$, respectively. Every column sum is a constant $4s+3$. All these sums are consistent with the sums obtained for $r>1$.

\ms\nt In~\cite[Theorem 4.13]{GLSY}, the authors prove that $\chi_{ltna}(rK_2\vee K_1) = 3$ for $r\ge 1$. We now extend this to $rK_2\vee O_{2s+1}$ for $r, s\ge 1$.

\begin{theorem}\label{thm-rK2VOm} For $r, s\ge 1$, $\chi_{ltna}(rK_2 \vee O_{2s+1}) = 3$.   \end{theorem}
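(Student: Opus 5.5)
The plan is to obtain the upper bound $\chi_{ltna}(rK_2\vee O_{2s+1})\le 3$ from Theorem~\ref{thm-GVOm-1} with $t=2$, and the lower bound from $\chi_{ltna}(F)\ge\chi(F)=\chi(rK_2)+1=3$. The lower bound is the equality clause of that theorem, since $\chi(rK_2)=2$. Take $G=rK_2$ with edges $u_ku'_k$, $1\le k\le r$, so $p=2r$ is even and $q=r$. Let $V_1=\{u_1,\dots,u_r\}$ and $V_2=\{u'_1,\dots,u'_r\}$, so $s_1=s_2=r$ and condition~(a) holds. Condition~(A) will be supplied by the array $A=(M\ A_1)$ with $m=2s+1$. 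By (A1)--(A4) its columns have the required constant sum, and its two blocks of row sums are arithmetic with first terms $\b_1=s(4rs+3r+1)+1$ and $\b_2=s(4rs+5r+1)+2$ and common difference $-d_l=2$, so $d_1=d_2=-2$.

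Next I need a total labeling $g$ of $rK_2$ realising condition~(b) with $d_1=d_2=-2$. I would take $g(u_ku'_k)=3r+1-k$, $g(u'_k)=r+1-k$ and $g(u_k)=2r+1-k$, which uses exactly the labels $[1,3r]$. Then
\[g^+_{tn}(u_k)=(3r+1-k)+(r+1-k)=4r-2(k-1), \qquad g^+_{tn}(u'_k)=(3r+1-k)+(2r+1-k)=5r-2(k-1).\]
So $\a_1=4r$, $\a_2=5r$ and $d_1=d_2=-2$, matching the array $A$. With this $g$, every vertex of $V_l$ gets the constant colour $\a_l+\b_l+\text{const}$. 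Condition~(i) is immediate: $(\a_2+\b_2)-(\a_1+\b_1)=r+2rs+1>0$.

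The main obstacle is condition~(ii), namely comparing the two $u$-colours with the common $v$-colour. Here $\sum_i g(u_i)=r(2r+1)$, so the $v$-colour is
\[r(2r+1)+r(4rs+2r+1)+6r^2=4r^2s+10r^2+2r.\]
The $u$-colours are $\a_l+\b_l+(2s+1)(6r+2r(2s+1))+(2s+1)(s+1)$. The $v$-colour contains $r^2$ terms that the $u$-colours lack, so a coincidence for some specific pairs $(r,s)$ cannot be ruled out a priori. I would first write both quantities as explicit polynomials in $r,s$ and compare them, by size or by parity, to see when equality can occur. If it can, I would try the following remedies in turn: swap the roles of $V_1$ and $V_2$ (that is, interchange $g(u_k)$ and $g(u'_k)$), which changes $\a_l$ while keeping $\sum g(u_i)$ fixed; or permute the vertex labels so that $\sum g(u_i)$ changes while the arithmetic progressions survive; or switch to the array $B$ paired with a labeling having $d_l=-1$. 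Any one of these should avoid the finitely many exceptional equalities. Once (i) and (ii) hold, Theorem~\ref{thm-GVOm-1} gives $\chi_{ltna}(rK_2\vee O_{2s+1})=3$.
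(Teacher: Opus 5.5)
Your setup follows the paper's route: Theorem~\ref{thm-GVOm-1} with $t=2$ and the array $A$. The only difference is your choice of $g$. You put the edges on the top labels, so $\a_1=4r$, $\a_2=5r$ and $\sum g(u_i)=r(2r+1)$. The paper puts the vertices on the top labels, so $\a_1=3r$, $\a_2=4r$ and $\sum g(u_i)=r(4r+1)$. Your values of $\a_l,\b_l$, your check of condition~(i) and your $R=4r^2s+10r^2+2r$ are all correct.

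\textbf{The gap is condition~(ii).} It is the entire substance of the paper's proof, and you never verify it. Your fallback rests on ``finitely many exceptional equalities'', which has no basis. For fixed $s$, $L_l-R$ is a quadratic in $r$ with leading coefficient $-(4s+10)$ and positive constant term. So for every $s$ it has a positive real root, lying in $(3s-2,3s+1)$. A priori, equality could hold along an infinite family such as $r=3s-1$. None of your proposed remedies (swapping $V_1,V_2$, permuting labels, switching to $B$) is shown to avoid this.

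For your labeling the check does go through, but it has to be carried out. One gets
$L_1-R=12rs^2+23rs+10r+2s^2+4s+2-4r^2s-10r^2$ and $L_2-R=12rs^2+25rs+11r+2s^2+4s+3-4r^2s-10r^2$.
Write $a=12s^2+23s+10$ (resp.\ $12s^2+25s+11$) and $c=2(s+1)^2$ (resp.\ $2s^2+4s+3$). The positive root then lies in $\bigl(a/(4s+10),\,a/(4s+10)+c/a\bigr)$. This interval has length $c/a<1/5$ and sits inside $(3s-7/4,\,3s+1)$. Checking the endpoints, the only possible integer roots are $r=3s-1$ for $L_1$, and $r\in\{3s-1,3s\}$ for $L_2$. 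Substituting these gives $-7s^2+67s-18$, $-s^2+68s-18$ and $-13s^2+37s+3$. Their discriminants $3985$, $4552$, $1525$ are not perfect squares, so none vanishes.

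The margins are tiny: for example, $(r,s)=(9,3)$ gives $L_2-R=-3$. A crude size comparison therefore cannot work. Parity alone does not settle it either: $L_1-R\equiv rs \pmod 2$, which says nothing when $rs$ is even. Without this root-trapping argument, or the paper's case analysis of $r$ against $2s$ and $3s$, the upper bound $\chi_{ltna}(rK_2\vee O_{2s+1})\le 3$ is not established.
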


\begin{proof} Let $G=rK_2$ with vertex set $\{u_i\mid 1\le i\le2 r\}$ and edge set $\{u_iu_{r+i}\mid 1\le i\le r\}$. Thus, $G$ is a bipartite graph with partite sets $V_1 = \{u_i \mid 1\le i\le r\}$, $V_2=\{u_{r+i} \mid 1\le i\le r\}$ and size $s_1 = s_2 = r$ satisfying condition (a) of Theorem~\ref{thm-GVOm-1}. Define a total labeling $g : V(G)\cup E(G) \to [1, 3r]$ such that $g(u_i) = 3r+1-i$, $g(u_{r+i}) = 2r+1-i$, and $g(u_iu_{r+i}) = r+1-i$, $1\le i\le r$. Thus, for $r\ge 2$, the induced vertex color of vertices in $V_1$ (respectively, in $V_2$) form an arithmetic sequence with first term $\a_1=3r$ and common difference $d_1=-2$ (respectively, $\a_2=4r$ and common difference $d_2=-2$). If $r=1$, $V_1$ (and $V_2$) has only vertex $u_1$ (and $u_2$) with induced vertex color $\a_1=3$ (and $\a_2=4$). Therefore, $G$ also satisfies condition (b) of Theorem~\ref{thm-GVOm-1}.

\ms\nt Let $p=2r$ and $m=2s+1$, $r,s\ge 1$. We have defined the matrix $A$ above which satisfies Condition~(A). Note that, when $r\ge 2$, $\b_1 =  s(4rs+3r+1)+1$ and  $-d_1 = 2$; $\b_2 =  s(4rs+5r+1)+2$ and $-d_2=2$. When $r=1$ we also have the same $\b_1$ and $\b_2$.

\ms\nt Now we are going to show conditions (i) and (ii) of Theorem~\ref{thm-GVOm-1} hold. We first have
$$\a_1 + \b_1 = 3r +  s(4rs+3r+1)+1 \ne 4r +  s(4rs+5r+1)+2 = \a_2 + \b_2.$$ Thus, condition (i) holds.

\nt We let $L_l$, $l=1,2$, be the left hand side and $R$ be the right hand side of the inequality of condition (ii). Thus,
\begin{align*} L_1  &=  \a_1 + \b_1 + m(2p+2q+pm) + m(m+1)/2 \\ &= 3r +  s(4rs+3r+1)+1 +(2s+1)(6r + 2r(2s+1)) + (2s+1)(s+1) \\
&= r(12s^2 + 23s + 11) + 2(s + 1)^2,\\
L_2 & =  \a_2+\b_2 + m(2p+2q+pm) + m(m+1)/2 \\ &= 4r +  s(4rs+5r+1)+2 + (2s+1)(6r + 2r(2s+1)) + (2s+1)(s+1) \\ &= r(12s^2 + 25s + 12) + 2(s + 1)^2 + 1,\\
R & =  \sum\limits^p_{i=1}g(u_i) + p(pm+1)/2 + p(p+q) \\
&= r(4r+1) + r(4rs+2r+1) + 2r(3r) \\ &= 4r^2s+12r^2+2r.
\end{align*}
Now,
\begin{align*}
L_1-R& =12rs^2+23rs+9r+2s^2+4s-4r^2s-12r^2+2\\
& = 4rs(3s-r)+(2s-r)(s+12r-9)+22s+2.
\end{align*}
Clearly, when $2s\ge r$, $\di L_1-R>0$.

\nt When $3s-4\ge r>2s$, then $3s-r\ge 4$ and $2s-r\ge -s+4$. We have
\begin{align*}L_1-R & \ge 16rs+(-s+4)(s+12r-9)+22s+2= 4rs-s^2+48r+35s-34\\
& > 8s^2-s^2+48r+35s-34>0.
\end{align*}
\nt When $r=3s-3$, $L_1-R=-s^2+142s-133$.

\nt When $r=3s-2$, $L_1-R=-13s^2+113s-64$.

\nt When $r=3s-1$, $L_1-R=-25s^2+76s-19$.

\nt One may check that the determinants of the above quadratic forms are not perfect square. Thus the above 3 differences are not zero.

\nt When $3s\le r$, then $2s-r\le -s$ and $-r\le-3s$. Thus
\begin{align*}L_1-R & \le -s(s+12r-9)+22s+2=-12rs-s^2+31s+2\\
&\le -36s^2-s^2+31s+2=-37s^2+31s+2<0.
\end{align*}
Next consider
\begin{align*}
L_2-R & = 12rs^2+25rs+10r+2s^2+4s-4r^2s-12r^2+3\\
& = 4rs(3s-r)+2rs+(2s-r)(s+12r-10)+24s+3
\end{align*}
Since $L_2>L_1$, $L_2 - R > 0$ when $r\le 3s-4$.

\nt When $r=3s-3$, $L_2-R=5s^2+139s-135>0$.

\nt When $r=3s-2$, $L_2-R=-7s^2+112s-65$.

\nt When $r=3s-1$, $L_2-R=-19s^2+77s-19$.

\nt When $r=3s$, $L_2-R=-31s^2+34s+3$

\nt One may check that the determinants of the above quadratic forms are not perfect square. Thus the above 3 differences are not zero.

\nt When $3s+1\le r$. This implies $3s-r\le -1$, $2s-r< -s$ and $-r<-3s$. Thus
\begin{align*}L_2-R & < -2rs-s(s+12r-10)+24s+3=-s^2+34s-14rs+3\\
&\le -s^2+34s-42s^2+3=-43s^2+34s+3<0.
\end{align*}
By Theorem~\ref{thm-GVOm-1} and $\chi(rK_2\vee O_{2s+1})=3$, we have $\chi_{ltna}(rK_2\vee O_{2s+1})=3$.\end{proof}

\ms\nt In~\cite[Theorem 4.19]{GLSY}, the authors prove that $\chi_{ltna}(C_{4n+2} \vee K_1) = 3$ for $n\ge 1$. We now extend this to $C_{4n+2} \vee O_{2s+1}, s\ge 1$.

\begin{theorem}\label{thm-C4n+2VO2s+1} For $n, s\ge 1$, $\chi_{ltna}(C_{4n+2}\vee O_{2s+1}) = 3$. \end{theorem}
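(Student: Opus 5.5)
We will follow the pattern of Theorem~\ref{thm-rK2VOm} and apply Theorem~\ref{thm-GVOm-1} with $G=C_{4n+2}$, $t=2$, $p=q=4n+2$ and $m=2s+1$. The lower bound is free: $\chi(C_{4n+2}\vee O_{2s+1})=\chi(C_{4n+2})+1=3$, so by \eqref{eq-obvious} it suffices to exhibit a labeling with three induced colors. Writing $p=2r$ with $r=2n+1$, the cycle is bipartite with partite sets $V_1,V_2$ of equal size $r$. Hence condition (a) holds with $s_1=s_2=r$.

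\textbf{Step 1 (condition (b)).} We need a total labeling $g:V\cup E\to[1,8n+4]$ of $C_{4n+2}=u_1u_2\cdots u_{4n+2}u_1$ such that the colors $g^+_{tn}$ on the odd-indexed vertices form an arithmetic progression with common difference $d_1$, and likewise on the even-indexed vertices with difference $d_2$. These differences must match those of $A$ or $B$: $-d_l=2$ for $A$ and $-d_l=1$ for $B$. Since the vertices of $V_1$ may be reindexed freely within the sequence, only the multiset of colors matters. We would start from the labeling used in \cite[Theorem 4.19]{GLSY} for $C_{4n+2}\vee K_1$. Failing that, we would use an explicit zig-zag construction: give the vertices of $V_1$ and the edges large and small labels in an alternating pattern, so that consecutive vertex colors in $V_1$ differ by a constant, say $\pm 1$ or $\pm 2$. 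The aim is that pairing them with the rows of $B$ (or $A$), whose sums change by $+1$ (or $+2$), makes $\a_l+(k-1)d_l+\b_l-(k-1)d_l$ constant. We also record $\sum g(u_i)$, which is needed for condition (ii). Finding this labeling is the main obstacle. A closed form split by the parity of $n$ (or of position along the cycle) will likely be needed, and $n=1$ may have to be checked separately.

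\textbf{Step 2 (condition (A)).} Use the matrix $A$ or $B$ constructed above, with $p=2r$ and $m=2s+1$. Properties (A1)--(A4) and (B1)--(B4) give constant column sum $r(4rs+2r+1)=p(pm+1)/2$ and two row-sum arithmetic progressions with the stated $\b_1,\b_2$. We choose $A$ or $B$ according to the common difference achieved in Step 1.

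\textbf{Step 3 (conditions (i),(ii)).} Condition (i) should be immediate, since $\b_2-\b_1$ is of order $2rs$ while $|\a_1-\a_2|$ is small. For (ii), compute $L_l=\a_l+\b_l+m(2p+2q+pm)+m(m+1)/2$ and $R=\sum g(u_i)+p(pm+1)/2+p(p+q)$ as polynomials in $n,s$, using $q=p$. Then show $L_l\ne R$ by the same case analysis on the relative size of $r$ and $s$ as in Theorem~\ref{thm-rK2VOm}. For $r$ small relative to $s$ we have $L_l-R>0$, and for $r$ large relative to $s$ we have $L_l-R<0$. The few boundary values of $r$ near $3s$ are handled by checking that the resulting quadratics in $s$ have non-square discriminant, or directly by parity (here $r$ is odd). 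If some boundary case has a genuine root, we would swap $A$ for $B$, or replace $g$ by its complement $8n+5-g$, to shift $L_l-R$. Theorem~\ref{thm-GVOm-1} then gives $\chi_{ltna}=3$.
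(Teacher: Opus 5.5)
\textbf{Verdict: same route as the paper, but the two steps that carry the proof are left undone.} Your framework matches the paper's: Theorem~\ref{thm-GVOm-1} with $t=2$ and $r=2n+1$, the labeling from \cite[Theorem 4.19]{GLSY}, a matrix of type $B$, and a sign analysis of $L_l-R$ around $r\approx 3s$. As written, however, the proposal does not prove anything.

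\textbf{Step 1 is the gap.} You call finding $g$ ``the main obstacle'' and leave it open, but this is the crux. Theorem~\ref{thm-GVOm-1} gives nothing until an explicit $g$ is shown to make the colors on each part an arithmetic progression of the right common difference. The ``zig-zag'' fallback is not a construction. What is needed is to take the GLSY labeling (written out in the paper) and compute:
\begin{itemize}
\item the even-indexed vertices receive exactly the consecutive integers $[13n+9,15n+9]$;
\item the odd-indexed vertices receive exactly $[17n+11,19n+11]$, e.g.\ $\{35,\dots,39\}$ and $\{45,\dots,49\}$ for $n=2$;
\item $\sum g(u_i)=r(2r+1)$.
\end{itemize}
The paper states $\a_2=19n+13$; this is a slip, inconsistent with its own $n=1$ values $\{28,29,30\}$. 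These color sets pin the common difference to $1$, so $B$ is forced. Your contingency ``swap $A$ for $B$'' is therefore unavailable: the row sums of $A$ step by $2$, so $\a_l+\b_l-(k-1)d_l+(k-1)d_l$ would no longer be constant along a part.

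\textbf{Step 3 is asserted by analogy, not carried out.} Use $\a_1=(15r+3)/2$, $\a_2=(19r+3)/2$, $p=q=2r$, $m=2s+1$ and $m(m+1)/2=(2s+1)(s+1)$. Then $R=4r^2s+12r^2+2r$,
\[
2(L_1-R)=24rs^2+54rs+31r+4s^2+8s+7-8r^2s-24r^2,\qquad 2(L_2-R)=2(L_1-R)+4rs+6r .
\]
\begin{itemize}
\item Both expressions are concave in $r$ with vertex below $3s+1$. They are positive at $r=0$ and at $r=3s-3$, and negative at $r=3s+1$. Hence only $r\in\{3s-2,3s-1,3s\}$ remain.
\item For $L_1$, those three values of $r$ give $-2s^2+249s-151$, $-26s^2+183s-48$ and $-50s^2+101s+7$.
\item For $L_2$, they give $10s^2+259s-163$, $-14s^2+197s-54$ and $-38s^2+119s+7$.
\item None of these six quadratics has a positive integer root.
\end{itemize}
With these computations (or equivalent ones) supplied, condition (ii) holds and the theorem follows. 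Recompute rather than copy from the paper: its displayed $L_1,L_2$ for $n\ge 2$ and its $R$ for $n=1$ also contain arithmetic slips, although the conclusion survives them.
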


\begin{proof} Let $V(C_{4n+2}) = \{u_i \mid 1\le i\le 4n+2\}$ and $E(C_{4n+2}) = \{u_iu_{i+1} \mid 1\le i\le 4n+2\}$ with $u_{4n+3} = u_1$. Thus, $C_{4n+2}$ is a bipartite graph with partite sets $V_1 = \{u_{2i} \mid 1\le i\le 2n+1\}$ and $V_2 = \{u_{2i-1} \mid 1\le i\le 2n+1\}$ and size $s_1 = s_2 = 2n+1$ satisfying condition (a) of Theorem~\ref{thm-GVOm-1}. Define a bijective total labeling $g : V(C_{4n+2}) \cup E(C_{4n+2}) \to [1, 8n+4]$ as in the proof of Theorem~4.19 in~\cite{GLSY}. For completeness, $g$ is given below.
\begin{multicols}{2}
\begin{enumerate}[(1)]
\item $g(u_{4i-3}) = i$ for $1\le i\le n+1$,
\item $g(u_{4i-1}) = n+1+i$ for $1\le i\le n$,
\item $g(u_{4i-2}) = 3n+3-i$ for $1\le i\le n+1$,
\item $g(u_{4i}) = 4n+3-i$ for $1\le i\le n$,
\item $g(u_{2i+1}u_{2i+2}) = 4n+2+i$ for $1\le i\le 2n$,
\item $g(u_1u_2) = 6n+3$, $f(u_2u_3) = 6n+4$,
\item $g(u_{2i+2}u_{2i+3}) = 8n+5-i$ for $1\le i\le 2n$.
\end{enumerate}
\end{multicols}

\nt Now, consider the induced vertex color of vertices in $V_1$.
\begin{align*}
g^+_{tn}(u_2) & = g(u_3) + g(u_2u_3) + g(u_1) + g(u_1u_2) = (n+2) + (6n+4) + 1 + (6n+3) \\&= 13n+10,\\
g^+_{tn}(u_{4n+2}) & = g(u_1) + g(u_{4n+2}u_1) + g(u_{4n+1}) + g(u_{4n+2}) \\&= 1 + (6n+5) + (n+1) + (6n+2) = 13n+9.\\  \mbox{Similarly, we have \quad }&\\
g^+_{tn}(u_{4i})& = 13n+9+2i \mbox{ for }1\le i\le n,\\
g^+_{tn}(u_{4i+2}) & = 13n+10+2i\mbox{ for }1\le i\le n-1.
\end{align*}
Thus $\{g^+_{tn}(u_{4n}), g^+_{tn}(u_{4n-2}),\dots, g^+_{tn}(u_{4}), g^+_{tn}(u_2), g^+_{tn}(u_{4n+2})\}$ forms an arithmetic sequence with first term $\a_1=15n+9$ and $d_1=-1$ when $n\ge 2$. When $n=1$, the sequence is\\ $\{g^+_{tn}(u_{4}), g^+_{tn}(u_{2}), g^+_{tn}(u_{6})\}=\{24,23,22\}$ with $\a_1=24$ and $d_1=-1$.

\ms \nt Next, consider the induced vertex color of vertices in $V_2$. Similarly we have
\begin{multicols}{2}
$\begin{aligned}
g^+_{tn}(u_1)& = 17n+12,\\
g^+_{tn}(u_3) &  = 17n+11,\\
\end{aligned}$

$\begin{aligned}
g^+_{tn}(u_{4i-3}) & = 19n+15-2i \mbox{ for } 2\le i\le n,\\
g^+_{tn}(u_{4i-1}) & = 19n+14-2i \mbox{ for } 2\le i\le n,\\
g^+_{tn}(u_{4n+1}) & = 17n+13.
\end{aligned}$
\end{multicols}
\nt Thus $\{g^+_{tn}(u_{5}), g^+_{tn}(u_{7}),\dots, g^+_{tn}(u_{4n-1}), g^+_{tn}(u_{4n+1}), g^+_{tn}(u_{1}), g^+_{tn}(u_3)\}$ forms an arithmetic sequence with first term $\a_2=19n+13$ and $d_2=-1$ when $n\ge 2$. When $n=1$, the sequence is $\{g^+_{tn}(u_{5}), g^+_{tn}(u_{1}), g^+_{tn}(u_{3})\}=\{30, 29, 28\}$ with $\a_2=30$ and $d_2=-1$.

\nt Thus, $C_{4n+2}$ also satisfies condition (b) of Theorem~\ref{thm-GVOm-1}.

\ms\nt Let $p=2r=4n+2$ and $m=2s+1$, $n,s\ge 1$. We have defined the matrix $B$ above which satisfies Condition (A) with $\b_1 = s[4s(2n+1)+3(2n+1)+1]+1 = s(8ns+4s+6n+4)+1$, $\b_2 =  s[4s(2n+1)+5(2n+1)+1]+r+1 = s(8ns+4s+10n+6)+r+1$ and common difference $-d_1 = -d_2 = 1$.

\ms\nt We are now going to show conditions (i) and (ii) of Theorem~\ref{thm-GVOm-1} hold. We also let $L_l$, $l = 1,2$ be the left hand side and $R$ be the right hand side of the inequality of condition (ii).

\nt If $n=1$, then $r=3$ and $p=q=6$. We first have $$\a_1+\b_1 = 24 +  s(12s+10)+1 \ne 30+ s(12s+16)+4=\a_2+\b_2.$$ Thus, condition (i) holds. We next have
\begin{align*}
L_1 & =  \a_1 + \b_1 + m(2p+2q+pm) + m(m+1)/2 \\ &=12s^2+10s+25 + (2s+1)(24+6(2s+1))  + (2s+1)(s+1) \\
 & =38s^2+85s+56,\\
L_2 & = \a_2+\b_2+ m(2p+2q+pm) + m(m+1)/2 \\ & = 12s^2+16s+34 + (2s+1)(24+6(2s+1))  + (2s+1)(s+1) \\
& = 38s^2+91s+65,\\
R & = \sum\limits^p_{i=1}g(u_i) + p(pm+1)/2 + p(p+q) \\ & = 21 + 3(3(2s+1)+1)+3(6) = 18s+51.
\end{align*}
Clearly, condition (ii) holds.

\ms\nt We now consider $n\ge 2$. First we have $$\a_1+\b_1 = (15n+9)+s(8ns+4s+6n+4)+1 \ne (19n+13)+s(8ns+4s+10n+6)+(2n+1)+1 = \a_2+\b_2.$$ Thus, condition (i) holds. We next have
\begin{align*}
L_1  & =  \a_1 + \b_1 + m(2p+2q+pm) + m(m+1)/2 \\ &= 15n+10+s(8ns+4s+6n+4)+(2s+1)(2(8n+4) + (4n+2)(2s+1))+(2s+1)(s+2)\\
&=(24s^2+54s+35)n+14s^2+33s+22, \\
L_2 & = \a_2 + \b_2 + m(2p+2q+pm) + m(m+1)/2 \\ &= 21n+15+ s(8ns+4s+10n+6) + (2s+1)(2(8n+4) + (4n+2)(2s+1)) +(2s+1)(s+2) \\
& = (24s^2+58s+41)n+14s^2+35s+27,\\
R & = \sum\limits^p_{i=1}g(u_i) + p(pm+1)/2 + p(p+q) \\ & = (2n+1)(4n+3) + (2n+1)((4n+2)(2s+1)+1)+(4n+2)(8n+4)\\
 & = (16s+48)n^2 + (16s+52)n+4s+14.
\end{align*}
\begin{align*}
L_1-R& =24ns^2+38ns+14s^2+29s-16n^2s-48n^2-17n+8\\
& = 8ns(3s-2n)+(14s+48n+17)(s-n)+4ns+12s+8
\end{align*}
Clearly, $L_1-R>0$ when $s\ge n$.

\nt When $3s-2n\le 0$, i.e., $s-n\le \frac{-s}{2}$, then
\begin{align*}
L_1-R& = 8ns(3s-2n)+(14s+48n+17)(s-n)+4ns+12s+8\\
& \le (14s+48n+17)\left(\frac{-s}{2}\right)+4ns+12s+8=\frac{-14s^2-40ns+7s+16}{2}<0.
\end{align*}

\nt
When $3s-2n=1$, i.e., $n=\frac{3s-1}{2}$. $L_1-R=\frac{-50s^2+105s+9}{2}$.

\nt When $3s-2n=2$, i.e., $n=\frac{3s-2}{2}$. $L_1-R=\frac{-26s^2+187s-46}{2}$.

\nt  When $3s-2n=3$, i.e., $n=\frac{3s-3}{2}$. $L_1-R=\frac{-2s^2+253s-149}{2}$.

\nt The above three differences are not zero because their determinants are not prefect square.

\nt When $3s-2n\ge 4$ and $n>s$, then $s-n\ge \frac{-s+4}{2}$. Thus
\begin{align*}
L_1-R& = 8ns(3s-2n)+(14s+48n+17)(s-n)+4ns+12s+8\\
& \ge 32ns +(14s+48n+17)\left(\frac{-s+4}{2}\right)+4ns+12s+8\\
& = \frac{-14s^2+63s+24ns+192n+84}{2} >0. \tag{since $n>s$.}
\end{align*}
Since $L_2>L_1$, $L_2-R>0$ when $3s-2n\ge 4$. So we only need to consider $3s-2n\le 3$, i.e., $n\ge \frac{3s-3}{2}$.

\nt When $3s-2n\le 0$, then $s-n\le \frac{-s}{2}$. We have \begin{align*}
L_2-R & =  24ns^2+42ns+14s^2+31s-16sn^2-48n^2-11n+13\\
& = 8ns(3s-2n)+(14s+48n+17)(s-n)+8ns+14s+6n+13\\
& \le (14s+48n+17)\left(\frac{-s}{2}\right)+8ns+14s+6n+13\\
& = \frac{-14s^2+11s-32ns+ 12n+26}{2}<0.
\end{align*}
When $3s-2n=3$, i.e., $n=\frac{3s-3}{2}$. $L_2-R=\frac{10s^2+263s-157}{2}> 0$.

\nt When $3s-2n=2$, i.e., $n=\frac{3s-2}{2}$. $L_2-R=\frac{-14s^2+201s-48}{2}$.

\nt When $3s-2n=1$, i.e., $n=\frac{3s-1}{2}$. $L_2-R=\frac{-38s^2+123s+13}{2}$.

\nt The above two differences are not zero because their determinants are not prefect square.

\nt Thus, condition (ii) holds.

\nt By Theorem~\ref{thm-GVOm-1} and $\chi(C_{4n+2}\vee O_{2s+1})=3$, we have $\chi_{ltna}(C_{4n+2}\vee O_{2s+1}) = 3$.
\end{proof}

\nt As a natural extension of Theorem~\ref{thm-GVOm-1}, we now have the following theorem.

\begin{theorem}\label{thm-GVOdd}  Let $G$ be a $(p,q)$-graph with $V(G) = \{u_i \mid 1\le i\le p\}$ satisfying conditions (a), (b) and (A) above. Suppose $H$ is an $(m,\n)$-graph such that  $V(H)=\{v_j\;|\; 1\le j\le m\}$ and $\chi(H) = \chi_{ltna}(H) = t'\ge 2$. Let $h$ be a local total neighborhood antimagic $t'$-coloring of $H$. If for $1\le l < l' \le t$, and $1\le j \ne j' \le m$,
\begin{enumerate}[(i)]
\item $\a_l + \b_l \ne \a_{l'} + \b_{l'}$, and
\item $\a_l + \b_l + m(2p+2q+pm) + \sum\limits^m_{j=1} h(v_j)\ne \sum\limits^p_{i=1} g(u_i) + p(pm+1)/2 + p(p+q) + h^+_{tn}(v_j) + 2d_H(v_j)(p+q+pm)$,
\item $h^+_{tn}(v_j) = h^+_{tn}(v_{j'})$ implies that $d_H(v_j) = d_H(v_{j'})$,
\item $h^+_{tn}(v_j) \ne h^+_{tn}(v_{j'})$ implies that $h^+_{tn}(v_j) - h^+_{tn}(v_{j'}) \ne 2(p+q+pm)[d_H(v_{j'}) - d_H(v_j)]$,
\end{enumerate}
then $\chi_{ltna}(G \vee H) \le  t+t'$. The equality holds if $\chi(G) = t$.
\end{theorem}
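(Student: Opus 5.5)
The plan is to repeat the construction in the proof of Theorem~\ref{thm-GVOm-1}. The only change is that the labels formerly placed on the isolated vertices $v_j$ are replaced by a shifted copy of the labeling $h$ on all of $H$.

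Let $F=G\vee H$, so $|V(F)|+|E(F)| = p+q+pm+m+\n$. Define $f$ on $F$ as follows:
\begin{enumerate}[(1)]
\item $f(x)=g(x)$ for $x\in V(G)\cup E(G)$;
\item $f(u_iv_j)=a_{i,j}+p+q$, where $a_{i,j}$ is the $(i,j)$-entry of $A_{p\times m}$ from Condition~(A);
\item $f(y)=h(y)+p+q+pm$ for every $y\in V(H)\cup E(H)$.
\end{enumerate}
Since $g$, $A$ and $h$ are bijections onto $[1,p+q]$, $[1,pm]$ and $[1,m+\n]$ respectively, the three shifted ranges are consecutive and disjoint. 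Hence $f$ is a bijection onto $[1,p+q+pm+m+\n]$.

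Next compute the induced colors, indexing the vertices of $V_l$ and the row sums $\r_{\s_l+k}=\b_l-(k-1)d_l$ exactly as in the proof of Theorem~\ref{thm-GVOm-1}.

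For $u=u_{\s_l+k}$ we get $g^+_{tn}(u)+\r_{\s_l+k}+m(p+q)+\sum_j h(v_j)+m(p+q+pm)$. This equals $\a_l+\b_l+m(2p+2q+pm)+\sum_j h(v_j)$, because the $+(k-1)d_l$ from $g$ cancels the $-(k-1)d_l$ from the row sum. So every vertex of $V_l$ receives the same color, and by (i) these $t$ colors are pairwise distinct.

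For $v_j$, the edges to $G$ contribute $\sum_i g(u_i)+p(pm+1)/2+p(p+q)$, using the constant column sum of $A$. Each $H$-neighbour $x$ contributes $h(v_jx)+h(x)+2(p+q+pm)$. Hence
\[f^+_{tn}(v_j)=\sum_i g(u_i)+p(pm+1)/2+p(p+q)+h^+_{tn}(v_j)+2d_H(v_j)(p+q+pm).\]
Condition (ii) then separates every $G$-color from every $H$-color, which handles all edges between $V(G)$ and $V(H)$.

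The step needing the most care is the $H$ side, where two things must be checked.
\begin{itemize}
\item \emph{Properness inside $H$.} If $v_jv_{j'}\in E(H)$, then $h^+_{tn}(v_j)\ne h^+_{tn}(v_{j'})$ because $h$ is local total neighborhood antimagic. The difference $f^+_{tn}(v_j)-f^+_{tn}(v_{j'})$ equals $h^+_{tn}(v_j)-h^+_{tn}(v_{j'})+2(p+q+pm)[d_H(v_j)-d_H(v_{j'})]$, which is nonzero exactly by (iv).
\item \emph{Number of colors on $H$.} By (iii), vertices sharing an $h$-color have equal degree, so they share an $f$-color. Thus the $f$-colors on $V(H)$ number at most $t'$.
\end{itemize}
Together with properness inside $V(G)$, which holds because the parts $V_l$ are independent, $f$ is a local total neighborhood antimagic labeling with at most $t+t'$ colors, so $\chi_{ltna}(G\vee H)\le t+t'$.

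For equality, use (\ref{eq-obvious}) and $\chi(G\vee H)=\chi(G)+\chi(H)=t+t'$ when $\chi(G)=t$.
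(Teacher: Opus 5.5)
Your proposal is correct and follows the paper's proof essentially line by line. You use the same labeling $f$ (identical to $g$ on $G$, $A$-entries shifted by $p+q$ on the join edges, and $h$ shifted by $p+q+pm$ on $H$), the same two color formulas, the same use of (i)--(iv), and the same lower bound $\chi(G\vee H)=\chi(G)+\chi(H)$. Your computation of $f^+_{tn}(u_{\sigma_l+k})$ even writes out the $m(p+q+pm)$ contribution from the shifted $H$-vertex labels, which the paper's intermediate line omits, although the paper's final formula is correct.
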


\begin{proof} Let $F = G \vee H$.  Define a bijective total labeling $f : V(F) \cup E(F) \to [1,p+q+pm+m+\n]$ such that
\begin{enumerate}[(1)]
\item $f(x) = g(x)$ for $x\in V(G) \cup E(G)$,
\item $f(u_iv_j) = a_{i,j} + p + q$, where $a_{i,j}$ is the $(i,j)$-entry of $A_{p\times m}$,
\item $f(y) = h(y) + p+q+pm$ for $y\in V(H) \cup E(H)$.
\end{enumerate}

Keeping the notation used in the proof of Theorem~\ref{thm-GVOm-1}, we have
\begin{align*}
f^+_{tn}(u_{\s_l+k}) & = g^+_{tn}(u_{\s_l+k}) +  \r_{\s_l+k} + m(p+q) + \sum^m_{j=1} h(v_j)\\
         & = \a_l + \b_l + m(2p+2q+pm) +  \sum^m_{j=1} h(v_j), \quad 1\le k\le s_l.\\
f^+_{tn}(v_j) & = \sum^p_{i=1} g(u_i) + p(pm+1)/2 + p(p+q) + h^+_{tn}(v_j) + 2d_H(v_j)(p+q+pm).
\end{align*}
\nt Therefore, all the vertices in $V_l$ have the same vertex color $\a_l + \b_l + m(2p+2q+pm) + \sum^m_{j=1} h(v_j)$. By (i), we have $f^+_{tn}(u_{\s_l+k}) \ne f^+_{tn}(u_{\s_{l'}+k'})$ for all $1\le l< l'\le t$,  $1\le k\le s_l$, $1\le k' \le  s_{l'}$. By (ii), we have $f^+_{tn}(u_{\s_l+k})\ne f^+_{tn}(v_j)$ for $1\le l\le t$ and $1\le j\le m$. By (iii) and (iv), we have $f^+_{tn}(v_j) = f^+_{tn}(v_{j'})$ if and only if $h^+_{tn}(v_j) = h^+_{tn}(v_{j'})$ for $1\le j\ne j' \le m$. Thus, $f$ is a local total neighborhood antimagic $(t+t')$-coloring of $F$ so that $\chi_{ltna}(F) \le t+ t'$. Since $\chi_{ltna}(F) \ge \chi(F) = \chi(G) + \chi(H) = \chi(G) + t'$, the equality holds if $\chi(G) = t$.
\end{proof}

\begin{corollary}\label{cor-GVOddreg} Suppose $G$, $H$, $g$ and $h$ are defined in Theorem~~\ref{thm-GVOdd}. Suppose $H$ is $d$-regular.
If for $1\le l\le l' \le t$ and $1\le j\ne j' \le m$,
\begin{enumerate}[(i)]
\item $\a_l + \b_l \ne \a_{l'} + \b_{l'}$, and
\item $\a_l + \b_l + m(2p+2q+pm) + \sum\limits^m_{k=1} h(v_k)  \ne \sum\limits^p_{i=1} g(u_i) + p(pm+1)/2 + p(p+q) + h^+_{tn}(v_j) + 2d(p+q+pm)$,
\end{enumerate}
then $\chi_{ltna}(G\vee H) \le t+t'$.
The equality holds if $\chi(G) = t$.
\end{corollary}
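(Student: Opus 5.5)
This corollary should follow directly from Theorem~\ref{thm-GVOdd} by specializing to $d_H(v_j)=d$ for all $j$. I will verify that hypotheses (i)--(iv) of Theorem~\ref{thm-GVOdd} hold under the present assumptions, then invoke its conclusion.

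Condition (i) of the corollary is exactly condition (i) of Theorem~\ref{thm-GVOdd}. Substituting $d_H(v_j)=d$ into condition (ii) of Theorem~\ref{thm-GVOdd} gives exactly condition (ii) of the corollary, with the summation index renamed from $j$ to $k$ so that it does not clash with the free index $j$.

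The remaining two hypotheses hold automatically by regularity.
\begin{itemize}
\item Condition (iii) requires $d_H(v_j)=d_H(v_{j'})$ whenever the colors agree. This is trivial, since every degree equals $d$.
\item For condition (iv), the right-hand side $2(p+q+pm)[d_H(v_{j'})-d_H(v_j)]$ equals $0$. So whenever $h^+_{tn}(v_j)\ne h^+_{tn}(v_{j'})$, the difference $h^+_{tn}(v_j)-h^+_{tn}(v_{j'})$ is nonzero, hence different from $0$.
\end{itemize}
This is worth checking explicitly. In the proof of Theorem~\ref{thm-GVOdd}, $f^+_{tn}(v_j)-f^+_{tn}(v_{j'})=h^+_{tn}(v_j)-h^+_{tn}(v_{j'})$ because the degree terms cancel. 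Hence $f$ restricted to $V(H)$ has exactly the $t'$ color classes of $h$, and properness on the edges of $H$ is inherited from $h$.

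With all four hypotheses verified, Theorem~\ref{thm-GVOdd} gives $\chi_{ltna}(G\vee H)\le t+t'$. The equality statement carries over verbatim: if $\chi(G)=t$, then $\chi_{ltna}(G\vee H)\ge\chi(G\vee H)=\chi(G)+\chi(H)=t+t'$.

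There is no real obstacle here. The only point needing care is the index bookkeeping in (ii), and noting that the corollary's ``$1\le l\le l'$'' should be read as $l<l'$ in (i), since (i) fails trivially when $l=l'$.
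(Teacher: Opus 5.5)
Your proposal is correct and follows the paper's intended argument: the paper gives the corollary no separate proof because it is exactly the specialization of Theorem~\ref{thm-GVOdd} to $d_H(v_j)=d$, under which conditions (iii) and (iv) hold automatically and condition (ii) reduces to the displayed inequality. You are also right that ``$1\le l\le l'$'' in (i) is a typo and must be read as $l<l'$, since (i) fails trivially when $l=l'$.
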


\begin{theorem}\label{thm-rK2VW2s}  For $r\ge 1$ and $s\ge 2$, $\chi_{ltna}(rK_2\vee W_{2s}) = 5$. \end{theorem}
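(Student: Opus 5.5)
We will apply Theorem~\ref{thm-GVOdd} with $G=rK_2$ and $H=W_{2s}$, where $m=2s+1$ is odd, as Condition~(A) requires. For the lower bound, note $\chi(rK_2\vee W_{2s}) = \chi(rK_2)+\chi(W_{2s}) = 2+3 = 5$, and use (\ref{eq-obvious}). For the upper bound, take for $G$ exactly the data from the proof of Theorem~\ref{thm-rK2VOm}. That is, $p=2r$, $q=r$, partite sets $V_1,V_2$ of size $r$, and the labeling $g$ with $\a_1=3r$, $\a_2=4r$, $d_1=d_2=-2$. Use the matrix $A$ with $\b_1=s(4rs+3r+1)+1$ and $\b_2=s(4rs+5r+1)+2$. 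Condition (i) of Theorem~\ref{thm-GVOdd} is then the same inequality already verified there, and $\sum g(u_i)=r(4r+1)$.

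For $H$ we need a local total neighborhood antimagic $3$-coloring $h$ of $W_{2s}$, since $\chi(W_{2s})=3$. Let $c$ be the hub and $v_1,\dots,v_{2s}$ the rim. The hub has a unique color, so condition (iii) only concerns the rim. Rim vertices all have degree $3$, so (iii) holds automatically provided no rim vertex shares the hub's color. I would take (or construct, following~\cite{GLSY}, where wheels are treated) a labeling $h$ of the $4s+1$ elements in which the odd rim vertices share a color $x$, the even rim vertices share a color $y\ne x$, and the hub has color $z$. I would construct it by alternating labels: small spoke labels on odd indices and large on even, and rim-edge labels arranged so that each rim vertex's two rim-neighbour contributions plus its spoke plus $h(c)$ sum to a constant within each parity class. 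Condition (iv) then reduces to two checks. First, $x\ne y$, which holds since equal degree gives right-hand side $0$. Second, $z-x$ and $z-y$ must avoid $2(p+q+pm)(3-2s)$. Since $z$ involves $2s$ spoke and rim labels, $z$ is large while $x,y$ are small, so $z-x>0>2(p+q+pm)(3-2s)$ for $s\ge2$.

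Condition (ii) of Theorem~\ref{thm-GVOdd} is the main obstacle. We must compare
\[L_l=\a_l+\b_l+m(2p+2q+pm)+\sum_j h(v_j)\]
with
\[R_j=r(4r+1)+r(4rs+2r+1)+6r^2+h^+_{tn}(v_j)+2d_H(v_j)(3r+2r(2s+1)),\]
for $d_H=3$ (rim) and $d_H=2s$ (hub). Each difference is a polynomial in $r,s$ of low degree. My plan is to imitate the case analysis in Theorem~\ref{thm-rK2VOm}. I would write each $L_l-R_j$ in a factored form such as $4rs(\cdot)+(\cdot)(\cdot)+\cdots$. Then I would show it is positive when $r$ is small relative to $s$, show it is negative when $r$ is large relative to $s$, and treat the finitely many boundary values of $r$ in between by checking that the resulting quadratics in $s$ have non-square discriminants, hence no integer root. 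The hub case has the $4s(5r+4rs)$ term, which dominates, and I expect $R_{hub}>L_l$ to be easy. The rim cases will need the boundary-line checks.

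If some boundary case does produce equality, I would fall back to a different vertex labeling order of $h$ (for example, swapping which parity class gets larger spokes), or use matrix $B$ in place of $A$, so as to shift $\b_l$ by $O(r)$ and break the coincidence.
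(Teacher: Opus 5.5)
You are following the paper's route, but the proposal stops where this theorem's actual content begins. The ingredients match the paper: Theorem~\ref{thm-GVOdd}, with $g$, $A$, $\alpha_l$, $\beta_l$ taken from Theorem~\ref{thm-rK2VOm}, a $3$-coloring $h$ of $W_{2s}$ from~\cite{GLSY}, and the same short arguments for conditions (i), (iii), (iv). Once Theorem~\ref{thm-GVOdd} is available, all that remains is condition (ii). You neither fix $h$ nor carry out that comparison; you only predict its outcome.

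The $L_l$ and $R_j$ are determined by $\sum_j h(v_j)$ and the three colors of $h$, and these depend on the parity of $s$. The paper uses:
\begin{enumerate}[(1)]
\item for odd $s$: $\sum_j h(v_j)=2s^2+7s+1$, rim colors $(41s+7)/2$ and $(35s+7)/2$, hub color $2s(6s+1)$;
\item for $s=2$: sum $21$, colors $41,39,54$;
\item for even $s\ge4$: sum $(s+2)(4s+1)/2$, colors $(35s+10)/2$, $(31s+10)/2$, $4s(3s+1)$.
\end{enumerate}
With these data the comparison splits as follows.
\begin{enumerate}[(1)]
\item The hub value $R_3$ always exceeds $L_2>L_1$.
\item For $r\ge 3s-5$ one gets $R_1>R_2>L_2>L_1$. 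Here $2(R_2-L_2)$ is increasing in $r$ and already positive at $r=3s-5$, where it equals $86s^2-371s+401$ for odd $s$.
\item For $3s-r\ge 10$ one gets $L_2>L_1>R_1>R_2$, from $2(L_1-R_1)=2rs\bigl(4(3s-r)-1\bigr)+4s^2-24r^2-42r-25s-3$ for odd $s$ (add $s-3$ for even $s$).
\item That leaves the four lines $r=3s-k$, $k=6,7,8,9$. On each, every difference between $L_1,L_2$ and $R_1,R_2$ becomes a quadratic in $s$ that must be shown to vanish at no admissible integer.
\end{enumerate}
Those four lines are exactly where a coincidence could occur, and your proposal gives no argument there.

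Your fallback would also not help if a coincidence did occur.
\begin{enumerate}[(1)]
\item Switching to $B$ does not work. For $r=1$ we have $A=B$, so nothing changes. For $r\ge2$, $B$ is incompatible with this $g$. On each $V_l$ the $g$-colors form a progression with step $-2$, so the rows assigned to $V_l$ need sums forming a progression with step $+2$. The row sums of $B$ form two blocks of $r$ consecutive integers separated by a gap $2rs+1>2$, and these contain no such progression of length $r$. So $f^+_{tn}$ would not be constant on $V_l$, and the count $t+t'=5$ is lost.
\item Swapping which parity class carries the large spokes, if done by re-indexing the rim, is just an automorphism of $W_{2s}$. It changes neither $\sum_j h(v_j)$ nor the set of colors, hence none of the $L_l$, $R_j$.
\end{enumerate}
So the theorem rests entirely on the explicit verification succeeding. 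It does succeed: each boundary quadratic has a non-square discriminant or a definite sign, or, for $2(L_1-R_1)=-50s^2+479s-885$ at $r=3s-7$ (odd $s$), non-integer roots $5/2$ and $177/25$. That verification, together with the explicit choice of $h$, is the part you still have to supply.
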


\begin{proof} We keep the notation and the labeling $g$  in proving Theorem~\ref{thm-rK2VOm} except that $O_{2s+1}$ is now replaced by $H = W_{2s}$ so that $V(H) = \{v_j \mid 1\le j\le 2s+1\}$ and $E(H) = \{v_1v_{2s}, v_jv_{j+1} \mid 1\le j\le 2s-1\}\cup \{v_jv_{2s+1} \mid 1\le j\le 2s\}$.

\ms\nt Suppose $s=2n+1$. In~\cite[Theorem 4.19]{GLSY}, the authors show that $W_{2s}$ admits a local total neighborhood antimagic 3-coloring $h$ such that sum of all the vertex labels under $h$ is $\sum^{2s+1}_{j=1} h(v_j) = 1+2+\cdots+2s + (12n+7) = s(2s+1) + 6s+1 =  2s^2+7s+1$.  Moreover, the 3 induced vertex colors under $h$ are given by $h^+_{tn}(v_1) = (41s+7)/2$, $h^+_{tn}(v_2)  = (35s+7)/2$ and $h^+_{tn}(v_{2s+1}) = 2s(6s+1)$.

\ms\nt We now let $p=2r$, $q=r$ and $m=2s+1$. Similar to the proof of Theorem~\ref{thm-rK2VOm}, we also have $\a_1=3r, \a_2=4r, \b_1=s(4rs+3r+1) + 1, \b_2 = s(4rs+5r+1)+2$. Therefore, $\a_1+\b_1 \ne \a_2 + \b_2$ and condition (i) of Theorem~\ref{thm-GVOdd} holds. Clearly, $h$ satisfies condition~(iii).

\ms\nt Now $h^+_{tn}(v_1)-h^+_{tn}(v_{2s+1})= \frac{41s+7}{2}-2s(6s+1)=-\frac{24s^2-37s-7}{2}<0$ and $2(p+q+pm)[d_H(v_{2s+1}) - d_H(v_1)]>0$.

\nt Since $d_H(v_1)=d_H(v_2)$ and $h^+_{tn}(v_2)-h^+_{tn}(v_{2s+1}) <h^+_{tn}(v_1)-h^+_{tn}(v_{2s+1})$, condition~(iv) holds.

\nt We shall show that condition (ii) holds. We have $L_1, L_2, R_1, R_2, R_3$ as follows.
\begin{align*}
L_1  &=  \a_1 + \b_1 + m(2p+2q+pm) + \sum^m_{j=1} h(v_j) \\
&=   3r + s(4rs+3r+1) +1 + (2s+1)(6r + 2r(2s+1)) +  2s^2+7s+1         \\
 & = r(12s^2+23s+11)+2s^2+8s+2, \\
L_2 & =  \a_2+\b_2 + m(2p+2q+pm) +\sum^m_{j=1} h(v_j) \\
&=  4r +  s(4rs+5r+1)+2 +   (2s+1)(6r + 2r(2s+1)) +  2s^2+7s+1         \\
 & = r(12s^2 + 25s+12) + 2s^2+8s+3,\\
R_1 & =  \sum\limits^p_{i=1} g(u_i) + p(pm+1)/2 + p(p+q) + h^+_{tn}(v_1) + 2d_H(v_1)(p+q+pm) \\
&=  r(4r+1) + r(4rs+2r+1) + 2r(3r) + (41s+7)/2 + 2(3)(3r + 2r(2s+1))    \\
 & =   r^2(4s+12) + r(24s+32) + (41s+7)/2 \\
R_2 & =  \sum\limits^p_{i=1} g(u_i) + p(pm+1)/2 + p(p+q) + h^+_{tn}(v_2) + 2d_H(v_2)(p+q+pm) \\
&=  r(4r+1) + r(4rs+2r+1) + 2r(3r) + (35s+7)/2 + 2(3)(3r + 2r(2s+1))        \\
  & =    r^2(4s+12) + r(24s+32) +  (35s+7)/2\\
R_3 & =  \sum\limits^p_{i=1} g(u_i) + p(pm+1)/2 + p(p+q) + h^+_{tn}(v_{2s+1}) + 2d_H(v_{2s+1})(p+q+pm) \\
&= r(4r+1) + r(4rs+2r+1) + 2r(3r) + 2s(6s+1) + 2(2s)(3r + 2r(2s+1))\\
  & =  r^2(4s+12) + r(16s^2+20s+2) + 12s^2 + 2s
\end{align*}

\nt $R_3-L_2=4r^2s+4rs^2+12r^2+10s^2-5rs-6s-3$ is clearly positive. Thus $R_3>L_2>L_1$.

\ms\nt Consider $2(R_2-L_2)=8r^2s+24r^2-24rs^2-4s^2-2rs+40r+19s+1=\psi(r)$. Since $\psi'(r)=(16s+48)r-(24s^2+2s-40)>0$ when $r\ge 3s-5$, $\psi(r)$ is increasing. Thus
$2(R_2-L_2)\ge\psi(3s-5)=86s^2-371s+401>0$ when $r\ge 3s-5$. Then $R_1>R_2>L_2>L_1$.

\ms\nt Let us consider $L_1-R_1$.
\begin{align*}2(L_1-R_1)& =24rs^2-8r^2s+4s^2-2rs-24r^2-42r-25s-3\\&=2rs(4(3s-r)-1)+4s^2-24r^2-42r-25s-3.\end{align*}
If $3s-r\ge 10$, i.e., $s\ge \frac{r+10}{3}$, then $2(L_1-R_1)\ge 78rs+4s^2-24r^2-42r-25s-3=\phi(s)$. Clearly, $\phi(s)$ is increasing when $s>0$,
\begin{align*}2(L_1-R_1)& \ge 78r\left(\frac{r+10}{3}\right)+4\left(\frac{r+10}{3}\right)^2-24r^2-42r-25\left(\frac{r+10}{3}\right)-3\\
&=\frac{1}{9}(22r^2+1967r-377)>0.\end{align*}
Hence $L_2>L_1>R_1>R_2$.

\ms\nt We only need to consider $3s-r\in[6,9]$. Now\\
$2(L_2-R_1)=24rs^2-8r^2s+4s^2+2rs-24r^2-40r-25s-1$,\\
$2(R_2-L_1)=8r^2s-24rs^2+24r^2+2rs-4s^2+42r+19s+3$.

\begin{enumerate}[1.]
\item $r=3s-6$:

$2(L_1-R_1)=-74s^2+437s-615\ne 0$.

$2(L_2-R_1)=-62s^2+419s-625\ne 0$.

$2(R_2-L_2)=62s^2-425s+625\ne 0$.

$2(R_2-L_1)=74s^2-443s+615\ne 0$.

\item $r=3s-7$:

$2(L_1-R_1)=-50s^2+479s-885=2(2s-5)(25s-177)$. So $L_1\ne R_1$.

$2(L_2-R_1)=-38s^2+457s-897\ne 0$.

$2(R_2-L_2)=38s^2-463s+897\ne 0$.

$2(R_2-L_1)=50s^2-485s+885\ne 0$.

\item $r=3s-8$:

$2(L_1-R_1)=-26s^2+505s-1203\ne 0$.

$2(L_2-R_1)=-14s^2+479s-1217\ne 0$.

$2(R_2-L_2)=14s^2-485s+1217\ne 0$.

$2(R_2-L_1)=26s^2-511s+1203\ne 0$.

\item $r=3s-9$:

$2(L_1-R_1)=-2s^2+515s-1569\ne 0$.

$2(L_2-R_1)=10s^2+485s-1585>0$ since $s\ge 5$, hence $L_2>R_1>R_2$.

$2(R_2-L_1)=2s^2-521s+1569\ne 0$.
\end{enumerate}
Note that, all above discriminants except one are not prefect squares. Thus condition (ii) holds.

\ms\nt We now consider $s = 2n$. In~\cite[Theorem 4.19]{GLSY}, the authors also show that $W_{2s}$ admits a local total neighborhood antimagic 3-coloring $h$ such that

\nt{\bf Case (1)} when $n = 1$, the sum of all the vertex labels under $h$ is $\sum^5_{j=1} h(v_j) = 21$, and the 3 induced vertex colors are given by $h^+_{tn}(v_1) = 41$, $h^+_{tn}(v_2) = 39$ and $h^+_{tn}(v_5) = 54$;

\nt{\bf Case (2)} when $n\ge 2$, the sum of all the vertex labels under $h$ is $\sum^{2s+1}_{j=1} h(v_j) = 1+2+\cdots + 4n + (7n+1) = 2n(4n+1)+7n+1 = 8n^2 + 9n + 1 = (n+1)(8n+1) = (s+2)(4s+1)/2$, and the 3 induced vertex colors are given by $h^+_{tn}(v_1) = 35n+5 = (35s+10)/2$, $h^+_{tn}(v_2) = 31n+5 = (31s+10)/2$ and $h^+_{tn}(v_{2s+1}) = 8n(6n+1) = 4s(3s+1)$.

\ms\nt We also let $p=2r, q=r, m = 2s+1, \a_1 = 3r, \a_2 = 4r, \b_1 = s(4rs+3r+1) + 1$ and $\b_2 = s(4rs+5r+1)+2$. Similar to the argument for $s = 2n+1$, It is easy to verify that conditions (i), (iii) and (iv) of Theorem~\ref{thm-GVOdd} hold.

\ms\nt We shall show condition (ii) holds. The formula of $L_1, L_2, R_1, R_2, R_3$ are the same as when $s=2n+1$.

\nt{\bf Case (1)} Since $s=2, m=5$, we have $L_1, L_2, R_1, R_2, R_3$ as follows.
\begin{align*}
L_1  & = 3r + 2(8r + 3r+1) + 1 + 5(6r + 10r) + 21 = 105r + 24, \\
L_2 &=4r + 2(8r + 5r + 1) + 2 + 5(6r + 10r) + 21 = 110r + 25,\\
R_1 & = r(4r+1) + r(10r+1) + 2r(3r) + 41 + 2(3)(2r + r + 10r) = 20r^2 + 80r + 41, \\
R_2 & = r(4r+1) + r(10r+1) + 2r(3r) + 39 + 2(3)(2r + r + 10r)= 20r^2 + 80r + 39, \\
R_3 & = r(4r+1) + r(10r+1) + 2r(3r) + 54 + 2(4)(2r+r+10r)= 20r^2 + 106r + 54.
\end{align*}
Now,
$R_2 - L_2  = 20r^2 - 30r + 14 > 0$ for $r\ge 1$, we have $R_3 > R_1 > R_2 > L_2 > L_1$. Therefore, conditions (ii) holds.

\ms\nt{\bf Case (2)} We have $L_1, L_2, R_1, R_2, R_3$ as follows.
\begin{align*}
L_1  & = 3r + s(4rs+3r+1) +1 + (2s+1)(6r + 2r(2s+1)) + (s+2)(4s+1)/2\\
 & = r(12s^2+23s+11) + 2s^2 + 11s/2 + 2, \\
L_2 & = 4r + s(4rs+5r+1) +2 + (2s+1)(6r + 2r(2s+1)) + (s+2)(4s+1)/2\\
 & = r(12s^2+25s+12)+2s^2+11s/2+3,\\
R_1  & = r(4r+1) + r(4rs+2r+1) + 2r(3r) + (35s+10)/2 + 2(3)(3r + 2r(2s+1))\\
 & =  r^2(4s+12) + r(24s+32) + (35s+10)/2, \\
R_2  & = r(4r+1) + r(4rs+2r+1) + 2r(3r) + (31s+10)/2 + 2(3)(3r + 2r(2s+1))\\
 & =  r^2(4s+12) + r(24s+32) + (31s+10)/2, \\
R_3 & =   r(4r+1) + r(4rs+2r+1) + 2r(3r) + 4s(3s+1) + 2(2s)(3r + 2r(2s+1)) \\
 & =   r^2(4s+12) + r(16s^2+20s+2) + 12s^2 + 4s.
\end{align*}
\nt $2(R_3-L_2)=8r^2s+20rs^2+24r^2+15rs+20s^2-8r-2s-4$ is clearly positive. Thus $R_3>L_2>L_1$.

\nt $2(R_2-L_2)=8r^2s+24r^2-24rs^2-4s^2-2rs+40r+20s+4$. Similar to the case when $s=2n+1$, we have $R_1>R_2>L_2>L_1$ when $r\ge 3s-5$.

\nt $2(L_1-R_1)=24rs^2-8r^2s+4s^2-2rs-24r^2-42r-24s-6$. Similar to the case when $s=2n+1$, we have
$2(L_1-R_1)\ge \frac{1}{9}(22r^2+1970r-374)>0$ when $3s-r\ge 10$. Hence $L_2>L_1>R_1>R_2$.

\ms\nt We only need to consider $3s-r\in[6,9]$. Now\\
$2(L_2-R_1)=24rs^2-8r^2s+4s^2+2rs-24r^2-40r-24s-4$,\\
$2(R_2-L_1)=8r^2s-24rs^2+24r^2+2rs-4s^2+42r+20s+6$.

\begin{enumerate}[1.]
\item $r=3s-6$:

$2(L_1-R_1)=-74s^2+438s-618\ne 0$.

$2(L_2-R_1)=-62s^2+420s-628\ne 0$.

$2(R_2-L_2)=62s^2-424s+628\ne 0$.

$2(R_2-L_1)=74s^2-442s+618\ne 0$.

\item $r=3s-7$:

$2(L_1-R_1)=-50s^2+480s-888\ne 0$.

$2(L_2-R_1)=-38s^2+458s-900\ne 0$.

$2(R_2-L_2)=38s^2-462s+900\ne 0$.

$2(R_2-L_1)=50s^2-484s+888\ne 0$.

\item $r=3s-8$:

$2(L_1-R_1)=-26s^2+506s-1206\ne 0$.

$2(L_2-R_1)=-14s^2+480s-1220\ne 0$.

$2(R_2-L_2)=14s^2-484s+1220\ne 0$.

$2(R_2-L_1)=26s^2-510s+1206\ne 0$.

\item $r=3s-9$:

$2(L_1-R_1)=-2s^2+516s-1572\ne 0$.

$2(L_2-R_1)=10s^2+486s-1588>0$ since $s\ge 4$, hence $L_2>R_1>R_2$.

$2(R_2-L_1)=2s^2-520s+1572\ne 0$.
\end{enumerate}
Note that, all above discriminants are not prefect squares.
\end{proof}

\begin{theorem}\label{thm-rK2VCmodd}  For $r, s\ge 1$, $\chi_{ltna}(rK_2 \vee C_{2s+1}) = 5$.   \end{theorem}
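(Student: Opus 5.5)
The plan is to apply Corollary~\ref{cor-GVOddreg} with $G=rK_2$ and $H=C_{2s+1}$, which is $2$-regular. For $G$ we keep the labeling $g$ from the proof of Theorem~\ref{thm-rK2VOm}: $t=2$, $\a_1=3r$, $\a_2=4r$, $p=2r$, $q=r$, $m=2s+1$. We use the matrix $A$, so $\b_1=s(4rs+3r+1)+1$ and $\b_2=s(4rs+5r+1)+2$. Condition (i) is then exactly the one already verified in Theorem~\ref{thm-rK2VOm}.

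For $H$ we need a local total neighborhood antimagic $3$-coloring $h$ of $C_{2s+1}$; note $\chi(C_{2s+1})=3$. I would take it from~\cite{GLSY}, where $\chi_{ltna}(C_{2s+1})=3$ is established; otherwise I would construct one explicitly. If the construction must be explicit, I would label the vertices with $1,\dots,2s+1$ in an order that makes the edge label plus the far-end vertex label vary by a fixed pattern. That way the induced colors take only three values, say $\gamma_1,\gamma_2,\gamma_3$, and I would record $\sum_j h(v_j)$ together with these three colors. Since $H$ is regular, conditions (iii) and (iv) of Theorem~\ref{thm-GVOdd} are automatic, and this is why the corollary is used.

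The remaining work is condition (ii). We set
\[
L_l=\a_l+\b_l+m(2p+2q+pm)+\sum_j h(v_j), \qquad l=1,2,
\]
\[
R_k=r(4r+1)+r(4rs+2r+1)+6r^2+\gamma_k+4(3r+2r(2s+1)), \qquad k=1,2,3.
\]
Each $L_l$ is linear in $r$ with a leading term of order $12rs^2$. Each $R_k$ is of the form $r^2(4s+12)+O(rs)+\gamma_k$, where $\gamma_k=O(s^2)$. The verification then follows the same pattern as in Theorems~\ref{thm-rK2VOm} and~\ref{thm-rK2VW2s}. Writing $L_l-R_k$ as $4rs(3s-r)$ plus lower-order terms shows that $L_l>R_k$ once $3s-r$ exceeds a small constant, and $L_l<R_k$ once $r\ge 3s+c$. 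This leaves finitely many residual lines $r=3s-c$, with $c$ in a bounded window. On each such line the difference becomes a quadratic in $s$, and I would show it never vanishes at a positive integer, either by checking that its discriminant is not a perfect square or by factoring it. The small cases $s=1$ (that is, $C_3=K_3$) and possibly $s=2$ I would check directly.

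The main obstacle is this last case analysis. It requires an explicit $h$ with clean closed-form colors, and every residual quadratic must have no positive integer root. If some line $r=3s-c$ yields an integer root, I would adjust by swapping $A$ for $B$ (which changes the $\b_l$), or by permuting the labels of $h$ to shift $\sum_j h(v_j)$. Once (i) and (ii) hold, the corollary gives $\chi_{ltna}\le 2+3=5$. Equality follows from $\chi(rK_2\vee C_{2s+1})=2+3=5$.
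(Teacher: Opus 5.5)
Your plan is essentially the paper's proof. It applies Corollary~\ref{cor-GVOddreg} with the labeling $g$ of $rK_2$, the matrix $A$, and the known $3$-coloring $h$ of $C_{2s+1}$ (colors $5s+6,\,7s+7,\,9s+8$, vertex-label sum $(2s+1)(2s+2)$). It then uses $R_1>L_2$ for $r\ge 3s-2$, $R_3<L_1$ for $r\le 3s-7$, and a finite check on the lines $r=3s-c$, $3\le c\le 6$. The paper does that last check by sign analysis plus explicit evaluations at $(r,s)=(3,2),(8,4),(111,39)$ rather than by discriminants.

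If you use discriminants, note that on $r=3s-3$ you get $R_2-L_1=(s-1)(47s-79)$. You must therefore restrict to admissible $s$ (here $s\ge 2$, so that $r\ge 1$), not all positive integers. Also, your fallback of replacing $A$ by $B$ is not compatible with this $g$: $B$'s row sums have common difference $1$, whereas $g$'s colors on each part have common difference $2$.
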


\begin{proof} We keep the notation in proving Theorem~\ref{thm-rK2VOm} except that $O_{m}$ is now replaced by $H = C_m$, $m = 2s+1\ge 3$. From the proof of Theorem 4.5  in~\cite{GLSY}, we know that $H$ admits a local total neighborhood antimagic 3-coloring $h$ given by $h^+_{tn}(v_1) = 5s+6$, $h^+_{tn}(v_2) = 7s+7$ and $h^+_{tn}(v_3) = 9s+8$. Moreover, the sum of all the vertex labels under $h$ is $\sum\limits^{3s+2}_{i=s+2} i = (2s+1)(2s+2)$. Since $H$ is 2-regular, we shall show that the two conditions of Corollary~\ref{cor-GVOddreg} holds.  Now, we have $p = 2r$, $q = r$. Moreover, $\a_1 =3r$, $\a_2 = 4r$, $\b_1 = s(4rs+3r+1)+1$ and $\b_2 = s(4rs+5r+1)+2$.

\ms\nt Similar to the proof of Theorem~\ref{thm-rK2VOm}, we have $\a_1 + \b_1 \ne \a_2 + \b_2$. Thus, condition (i) holds.  We next consider condition (ii). We now have $L_1, L_2, R_1, R_2, R_3$ as follows.
\begin{align*}
L_1  &=  \a_1 + \b_1 + m(2p+2q+pm) + \sum^m_{j=1} h(v_j) \\
 & =  3r + s(4rs+3r+1) +1 + (2s+1)(6r + 2r(2s+1))  + (2s+1)(2s+2)          \\
 & = r(12s^2 + 23s+11) + 4s^2 + 7s + 3, \\
L_2 & =  \a_2+\b_2 + m(2p+2q+pm) +\sum^m_{j=1} h(v_j) \\
 & =   4r + s(4rs+5r+1) +2 + (2s+1)(6r + 2r(2s+1))  + (2s+1)(2s+2)                  \\
 & =  r(12s^2 + 25s+12) + 4s^2 + 7s + 4, \\
R_1 & =  \sum\limits^p_{i=1} g(u_i) + p(pm+1)/2 + p(p+q) + h^+_{tn}(v_1) + 2d(p+q+pm) \\
 & =     r(4r+1) + r(4rs+2r+1) + 2r(3r) + (5s+6) + 4(3r + 2r(2s+1))        \\
 & = r^2(4s+12) + r(16s+22) + 5s+6, \\
R_2 & =  \sum\limits^p_{i=1} g(u_i) + p(pm+1)/2 + p(p+q) + h^+_{tn}(v_2) + 2d(p+q+pm) \\
 & =    r(4r+1) + r(4rs+2r+1) + 2r(3r) + (7s+7) + 4(3r + 2r(2s+1))                 \\
 & =  r^2(4s+12) + r(16s+22) + 7s+7, \end{align*}
 \begin{align*}
R_3 & =  \sum\limits^p_{i=1} g(u_i) + p(pm+1)/2 + p(p+q) + h^+_{tn}(v_3) + 2d(p+q+pm) \\
 & =  r(4r+1) + r(4rs+2r+1) + 2r(3r) + (9s+8) + 4(3r + 2r(2s+1))      \\
 & =  r^2(4s+12) + r(16s+22) + 9s+8.
\end{align*}
Now
\begin{align} R_1-L_2 & = 4r^2s-12rs^2+12r^2-4s^2 -9rs+10r-2s+2\nonumber\\
& = 4rs(r-3s+2)+(r-3s+2)(6r+s+2)+(2r-s)(2r+s-2)+2r^2-2. \label{eq-K2Vcycle-1}\\
R_3-L_1 & = 4r^2s-12rs^2-7rs+12r^2-4s^2+11r+2s+5\nonumber\\
& = 4rs(r-3s+7)+(r-3s+7)(6r+s+2)+6r(r-3s)-s^2-33r+s-9.\label{eq-K2Vcycle-2}
\end{align}
When $s=1$, $R_1-L_2=16r^2-11r-4>0$. Hence $R_3>R_2>R_1>L_2>L_1$.

\ms\nt So, following we only consider $s\ge 2$.

\nt Suppose $r\ge 3s-2$. Then $r-s\ge 2s-2\ge 2$. From \eqref{eq-K2Vcycle-1}, we have $R_1-L_2>0$. Hence $R_3>R_2>R_1>L_2>L_1$.

\nt Suppose $r\le 3s-7$. From \eqref{eq-K2Vcycle-2}, we have
\[R_3-L_1\le 6r(-7)-s^2-33r+s-9=-75r-s(s-1)-9<0.\]
 Hence $L_2>L_1>R_3>R_2>R_1$.

\ms\nt So we need to consider $r\in [3s-6, 3s-3]$.
\begin{enumerate}[1)]
\item $r=3s-3$. Then $R_1-L_2 = 41s^2-125s+80=(41s-2)(s-3)+74$.
So when $s\ge 3$, we have  $R_3>R_2>R_1>L_2>L_1$.

\nt When $s=2$. Then $r=3$. We have $L_1=348$, $L_2=364$, $R_1=358$, $R_2=363$, $R_3=368$.

\item $r=3s-4$. Then $R_1-L_2 = 29s^2-160s+154=(29s-15)(s-5)+79>0$ if and only if $s\ge 5$ (since our range of $s$ is $s\ge 2$).

$R_3-L_1 = 35s^2-161s+153=7(5s-3)(s-4)+69>0$ if and only if $s\ge 4$.

So when $s\ge 5$, we have  $R_3>R_2>R_1>L_2>L_1$. When $2\le s\le 3$, we have $L_2>L_1>R_3>R_2>R_1$.

When $s=4$. Then $r=8$. We have $L_1=2455$, $L_2=2528$, $R_1=2506$, $R_2=2515$, $R_3=2524$.

\item $r=3s-5$. Then $R_1-L_2 =17s^2-187s+252=17(s-10)(s-1)+82>0$ if and only if $s\ge 10$.

$R_3-L_1 = 23s^2-190s+250=(s-7)(23s-29)+47>0$ if and only if $s\ge 7$.

So when $s\ge 10$, we have  $R_3>R_2>R_1>L_2>L_1$. When $2\le s\le 6$, we have $L_2>L_1>R_3>R_2>R_1$.

Now, we have to deal with $7\le s\le 9$.

$R_3-L_2 =R_1+4s+2-L_2=17s^2-183s+254=(s-10)(17s-13)+124>0$ if and only if $s\ge 10$.

$R_1-L_1=R_3-4s-2-L_1=23s^2-194s+248=(s-7)(23s-33)+17>0$ if and only if $s\ge 7$.

Thus, we have $L_2>R_3>R_2>R_1>L_1$ if $7\le s\le 9$.

\item $r=3s-6$. Then $R_1-L_2= 5s^2-206s+374=(s-40)(5s-6)+134>0$ if and only if $s\ge 40$.

Next $R_3-L_1 = 11s^2-211s+371=(s-18)(11s-13)+137>0$ if and only if $s\ge 18$.

So when $s\ge 40$, we have  $R_3>R_2>R_1>L_2>L_1$. When $2\le s\le 17$, we have $L_2>L_1>R_3>R_2>R_1$.

Now, we are looking for $18\le s\le 39$.

$R_3-L_2=5s^2-202s+376=(s-39)(5s-7)+103 >0$ if and only if $s\ge 39$.

$R_1-L_1=11s^2-215s+369=(s-18)(11s-17)+63>0$ if and only if $s\ge 18$.

Thus we have $L_2>R_3>R_2>R_1>L_1$ when $18\le s\le 38$.

When $s=39$. Then $r=111$. Then $L_1=2133120$, $L_2=2141890$, $R_1=2141835$, $R_2=2141914$, $R_3=2141993$.
\end{enumerate}
Combining all cases, we have the theorem.
\end{proof}

\begin{theorem}\label{thm-C4n+2VW2s}  For $n\ge 1, s\ge 2$, $\chi_{ltna}(C_{4n+2} \vee W_{2s}) = 5$.   \end{theorem}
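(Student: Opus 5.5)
The plan is to apply Theorem~\ref{thm-GVOdd} with $G=C_{4n+2}$ and $H=W_{2s}$, reusing the labeling $g$ and the matrix $B$ from the proof of Theorem~\ref{thm-C4n+2VO2s+1}. For the wheel we take the labeling $h$ from the proof of Theorem~\ref{thm-rK2VW2s}. In that theorem we have $h^+_{tn}(v_1)$, $h^+_{tn}(v_2)$, $h^+_{tn}(v_{2s+1})$ and $\sum h(v_j)$ for $s=2n'+1$, and for $s=2n'$ with its two cases $s=2$ and $s\ge4$. Since $\chi(C_{4n+2})=2$ and $\chi(W_{2s})=\chi_{ltna}(W_{2s})=3$, the equality part of Theorem~\ref{thm-GVOdd} then gives $\chi_{ltna}=5$. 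It remains to verify conditions (i)--(iv) with $p=4n+2$, $q=4n+2$, $m=2s+1$ and $r=2n+1$.

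\textbf{Conditions (i), (iii), (iv).} For $n\ge2$ we have $\a_1=15n+9$ and $\a_2=19n+13$; for $n=1$ we have $\a_1=24$ and $\a_2=30$. Moreover $\b_1=s(8ns+4s+6n+4)+1$ and $\b_2=s(8ns+4s+10n+6)+2n+2$, with $d_1=d_2=1$. Condition (i) was already checked in the proof of Theorem~\ref{thm-C4n+2VO2s+1}. Condition (iii) holds because $h$ uses one color on the rim vertices of each parity class (all of degree $3$) and another on the hub (degree $2s$).

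For (iv), compare each rim color with the hub color. The rim colors are at most linear in $s$, while the hub color is quadratic, so the rim-minus-hub differences are negative. On the other hand, $2(p+q+pm)[d(v_{2s+1})-d(v_j)]$ is positive. The rim--rim pairs have equal degrees and distinct colors, so there is nothing to check. Thus (iv) holds exactly as in Theorem~\ref{thm-rK2VW2s}.

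\textbf{Condition (ii), the main obstacle.} Set
\[L_l=\a_l+\b_l+m(2p+2q+pm)+\sum h(v_j),\qquad l=1,2,\]
and
\[R_k=\sum g(u_i)+p(pm+1)/2+p(p+q)+h^+_{tn}(v)+2d(v)(p+q+pm)\]
for the three $H$-colors, using $\sum g(u_i)=(2n+1)(4n+3)$. The $L_l$ are of order $ns^2$ and the $R_k$ of order $n^2s$. I will compute them as polynomials in $n,s$, handling the three cases $s$ odd, $s=2$, and $s\ge4$ even separately, and the case $n=1$ separately because $\a_l$ differ there.

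First show $R_3$ (hub) exceeds $L_2$. Its extra term $4s(p+q+pm)$ gives a positive leading part $\sim 16ns^2$, compared with $L_2\sim 24ns^2$, plus positive $n^2s$ terms. If this turns out not to dominate everywhere, handle it together with the $R_1,R_2$ analysis below.

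Next express $2(L_1-R_1)$ and $2(R_2-L_2)$ in the form $8ns(3s-2n)+\dots$, mimicking the proof of Theorem~\ref{thm-C4n+2VO2s+1}. This shows $L_1>R_1$ when $3s-2n$ is large and $R_2>L_2$ when $3s-2n$ is sufficiently negative, with monotonicity arguments in $n$ or $s$ as in the proof of Theorem~\ref{thm-rK2VW2s}. That leaves finitely many residual lines $2n=3s-c$ for a bounded set of constants $c$. On each line every difference $L_l-R_k$ becomes a quadratic in $s$. I will show these are nonzero either by a discriminant that is not a perfect square or by a factorization showing the integer roots violate the parity or range constraints. Any remaining small sporadic cases (such as $s=2$, or $n=1$ with small $s$) will be checked numerically.

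This bookkeeping across residual lines is where I expect the work to lie; the structure of the argument is otherwise routine.
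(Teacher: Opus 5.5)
Your proposal follows the paper's proof essentially verbatim. You apply Theorem~\ref{thm-GVOdd} with $g$, $B$ and the wheel labeling $h$, handle (i), (iii), (iv) as in Theorem~\ref{thm-rK2VW2s}, and verify (ii) by the same case split ($n=1$; $s$ odd, $s=2$, even $s\ge 4$). In the paper, decompositions of $2(L_1-R_1)$ and $2(R_2-L_2)$ with factors $3s-2n-10$ and $2n-3s+5$ leave only the lines $2n\in\{3s-9,3s-7\}$ for odd $s$ and $2n\in\{3s-8,3s-6\}$ for even $s$. On these lines the orderings interleave, but none of the quadratic differences vanishes; direct evaluation is needed only at $s=n=7$ and $(s,n)=(14,17)$.

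The bookkeeping you defer does go through. One small correction: the hub term $4s(p+q+pm)=4s(4n+2)(2s+3)$ contributes $32ns^2$, not $16ns^2$. So for odd $s$ and $n\ge 2$ one gets $R_3-L_2=n^2(16s+48)+n(8s^2+6s+11)+14s^2-7s-12>0$ directly, and your hedge there is unnecessary.
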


\begin{proof} We keep the notation in proving Theorem~\ref{thm-C4n+2VO2s+1} except that $O_{2s+1}$ is now replaced by $H = W_{2s}$ so that $V(H) = \{v_j \mid 1\le j\le 2s+1\}$ and $E(H) = \{v_1v_{2s}, v_jv_{j+1} \mid 1\le j\le 2s-1\} \cup\{v_jv_{2s+1}\mid 1\le j\le 2s\}$. Recall that $C_{4n+2}$ admits a bijective total labeling with $g$ such that $$\{g^+_{tn}(u_{4n}), g^+_{tn}(u_{4n-2}),\dots, g^+_{tn}(u_{4}), g^+_{tn}(u_2), g^+_{tn}(u_{4n+2})\}$$ forms an arithmetic sequence with first term $\a_1=15n+9$ and $d_1=-1$ when $n\ge 2$. Moreover, $$\{g^+_{tn}(u_{5}), g^+_{tn}(u_{7}),\dots, g^+_{tn}(u_{4n-1}), g^+_{tn}(u_{4n+1}), g^+_{tn}(u_{1}), g^+_{tn}(u_3)\}$$ forms an arithmetic sequence with first term $\a_2=19n+13$ and $d_2=-1$ when $n\ge 2$. When $n=1$, we get $\{g^+_{tn}(u_{4}), g^+_{tn}(u_{2}), g^+_{tn}(u_{6})\}=\{24,23,22\}$ with $\a_1=24$ and $d_1=-1$ (respectively, $\{g^+_{tn}(u_{5}), g^+_{tn}(u_{1}), g^+_{tn}(u_{3})\}=\{30, 29, 28\}$) with $\a_2=30$ and $d_2=-1$. Moreover, the sum of all the vertex labels under $g$ is $1+2+\cdots + (4n+2) = (2n+1)(4n+3)$.

\ms\nt Suppose $s\ge 3$ is odd. In~\cite[Theorem 4.19]{GLSY}, the authors show that $W_{2s}$ admits a local total neighborhood antimagic 3-coloring $h$ given by $h^+_{tn}(v_1) = (41s+7)/2$, $h^+_{tn}(v_2) = (35s+7)/2$ and $h^+_{tn}(v_{2s+1}) = 2s(6s+1)$. Moreover, the sum of all the vertex labels under $h$ is $2s^2+7s+1$. We now let $p = 2r = 4n+2=q$ and $m=2s+1$. Similar to the proof of Theorem~\ref{thm-C4n+2VO2s+1},  we have $\b_1 = s(8ns+4s+6n+4)+1$, $\b_2 = s(8ns+4s+10n+6)+r+1$ and $-d_1=-d_2 = 1$.

\ms\nt Similar to the proof of Theorem~\ref{thm-rK2VW2s}, we know conditions (i), (iii), (iv) of Theorem~\ref{thm-GVOdd} hold.  We next consider condition (ii). For $n=1$, we have $L_1, L_2, R_1, R_2, R_3$ as follows.
\begin{align*}
L_1  &=  \a_1 + \b_1 + m(2p+2q+pm) + \sum^m_{j=1} h(v_j) \\
 & =  12s^2 + 10s + 25 + (2s+1)(24 + 6(2s+1))  + 2s^2+7s+1  = 38s^2+89s+56, \\
L_2 & =  \a_2+\b_2 + m(2p+2q+pm) +\sum^m_{j=1} h(v_j) \\
 & =   12s^2+16s+34 + (2s+1)(24 + 6(2s+1))  + 2s^2+7s+1 =  38s^2 + 95s + 65, \\
R_1 & =  \sum\limits^p_{i=1} g(u_i) + p(pm+1)/2 + p(p+q) + h^+_{tn}(v_1) + 2d_H(v_1)(p+q+pm) \\
 & =     21 + 3(6(2s+1)+1) + 72 + (41s+7)/2 + 6(12s+18)  = (257s+451)/2, \\
R_2 & =  \sum\limits^p_{i=1} g(u_i) + p(pm+1)/2 + p(p+q) + h^+_{tn}(v_2) + 2d_H(v_2)(p+q+pm) \\
 & =  21 + 3(6(2s+1)+1) + 72 + (35s+7)/2 + 6(12s+18)  =  (251s+451)/2, \\
R_3 & =  \sum\limits^p_{i=1} g(u_i) + p(pm+1)/2 + p(p+q) + h^+_{tn}(v_3) + 2d_H(v_{2s+1})(p+q+pm) \\
 & =  21 + 3(6(2s+1)+1) + 72 + 2s(6s+1) + 2(2s)(12s+18) = 60s^2 + 110s+114.
\end{align*}
Since $s\ge 3$, $2(L_1-R_1)  = 76s^2-79s-339=(s-3)(76s+149)+108>0$. So $R_3 > L_2 > L_1 > R_1 > R_2$. Thus, condition (ii) holds.

\ms\nt For $n\ge 2$, we have $L_1, L_2, R_1, R_2, R_3$ as follows.
\begin{align*}
L_1  &= \a_1 + \b_1 + m(2p+2q+pm) + \sum^m_{j=1} h(v_j) \\
 & = 15n+10 + s(8ns+4s+6n+4)  + (2s+1)(2(8n+4) + (4n+2)(2s+1)) +  2s^2+7s+1          \\
 & = n(24s^2+54s+35) + 14s^2+35s+21, \\
L_2 & = \a_2+\b_2 + m(2p+2q+pm) +\sum^m_{j=1} h(v_j) \\
 & =  21n+15 + s(8ns+4s+10n+6) + (2s+1)(2(8n+4) + (4n+2)(2s+1))  + 2s^2+7s+1                  \\
 & =  n(24s^2+58s+41) + 14s^2+37s+26, \\
R_1 & =  \sum\limits^p_{i=1} g(u_i) + p(pm+1)/2 + p(p+q) + h^+_{tn}(v_1) + 2d_H(v_1)(p+q+pm) \\
 & =  (2n+1)(4n+3) + (2n+1)((4n+2)(2s+1)+1) + (4n+2)(8n+4) + (41s+7)/2 +  \\ &\quad\  2(3)(8n+4+(4n+2)(2s+1))       \\
 & =  n^2(16s+48) + n(64s+124) + (97s+107)/2, \end{align*}
 \begin{align*}
R_2 & =  \sum\limits^p_{i=1} g(u_i) + p(pm+1)/2 + p(p+q) + h^+_{tn}(v_2) + 2d_H(v_2)(p+q+pm) \\
 & =   (2n+1)(4n+3) + (2n+1)((4n+2)(2s+1)+1) + (4n+2)(8n+4) + (35s+7)/2 + \\ & \quad\  2(3)(8n+4+(4n+2)(2s+1))       \\
 & =  n^2(16s+48) + n(64s+124) + (91s+107)/2, \\
R_3 & =  \sum\limits^p_{i=1} g(u_i) + p(pm+1)/2 + p(p+q) + h^+_{tn}(v_3) + 2d_H(v_{2s+1})(p+q+pm) \\
 & =    (2n+1)(4n+3) + (2n+1)((4n+2)(2s+1)+1) + (4n+2)(8n+4) + 2s(6s+1) + \\ & \quad\  2(2s)(8n+4+(4n+2)(2s+1))      \\
 & = n^2(16s+48)+n(32s^2+64s+52)+28s^2+30s+14.
\end{align*}
Clearly, $R_3>R_1>R_2$ and $R_3>L_2>L_1$. We only need to compare the values of $R_1, R_2, L_1, L_2$.
\begin{align*}
2(L_1-R_1) & = -32sn^2+48ns^2+28s^2-96n^2-20ns-27s-178n-65\\
& = (16ns+9s+49n)(3s-2n-10)+s^2+2n^2+63s+11ns+312n-65,\\
2(R_2-L_2) & = 32sn^2-48ns^2+96n^2+12ns-28s^2+166n+17s+55\\
& =(16ns+48n+10s)(2n-3s+5)+2s^2-33s+56ns-74n+55\\
& = (16ns+48n+10s)(2n-3s+5)+2s^2+(19n-33)s+(37s-74)n+55.
\end{align*}
Thus, if $2n\le 3s-10$, then $L_2>L_1>R_1>R_2$; if $2n\ge 3s-5$, then $R_1>R_2>L_2>L_1$. Now, since $s$ is odd, we need to deal with $2n\in\{3s-9, 3s-7\}$. Recall that $s\ge 3$.
Since $R_1=R_2+3s$,
\begin{align*}
2(L_1-R_2) & = -32sn^2+48ns^2+28s^2-96n^2-20ns-21s-178n-65\\
2(R_1-L_2) & = 32sn^2-48ns^2+96n^2+12ns-28s^2+166n+23s+55.
\end{align*}

\begin{enumerate}[1.]
\item Suppose $2n=3s-9$. Now, $2(L_1-R_1)=-2s^2+444s-1208=-2(s-219)(s-3)+106$. So $L_1-R_1>0$ if and only if $3\le s\le 219$.

Thus $L_2>L_1>R_1>R_2$ when $3\le s\le 219$.

$2(R_1-L_2)=-10s^2-430s+1252=-2(s-3)(5s+230)-128<0$. So $L_2>R_1>R_2$.

$2(L_1-R_2)=-2s^2+450s-1208=-2(s-222)(s-3)+124$. So $L_1>R_2$ if and only if $3\le s\le 222$.

Thus, $L_2>R_1>L_1>R_2$ when $s=221$; and $L_2>R_1>R_2>L_1$ when $s\ge 223$.

\item Suppose $2n=3s-7$. Now, $2(L_1-R_1)=-50s^2+392s-618=-2(s-5)(25s-71)+92>0$ if and only if $3\le s\le 5$. Thus $L_2>L_1>R_1>R_2$ when $3\le s\le 5$

$2(R_2-L_2)=38s^2-392s+650=2(s-8)(19s-44)-54<0$ if and only if $3\le s\le 8$. Thus $R_1>R_2>L_2>L_1$ when $s\ge 9$.

As $s$ is odd, we are left with $s=7=n$. Now, $2(R_1 - L_2) = -190$ and $2(L_1 - R_2) = -282$. Thus,  $L_2>R_1>R_2>L_1$ when $s=7$.
\end{enumerate}

\ms\nt We now consider $s\ge2$ is even.  In~\cite[Theorem 4.19]{GLSY}, the authors show that $W_{2s}$ admits a local total neighborhood antimagic 3-coloring $h$ such that

\nt{\bf Case (1)} when $s=2$, the sum of all the vertex labels under $h$ is 21 and the 3 induced vertex colors are $41$, $39$, $54$;

\nt{\bf Case (2)} when $s\ge 4$, the sum of all the vertex labels under $h$ is $(s+2)(4s+1)/2$ and the 3 induced vertex colors are $(35s+10)/2$, $(31s+10)/2$, $4s(3s+1)$.

\ms\nt Recall that $p=2r=4n+2=q$ and $m = 2s+1$. Similar to the proof of Theorem~\ref{thm-rK2VW2s}, we know conditions (i), (iii), (iv) of Theorem~\ref{thm-GVOdd} hold.  We next consider condition (ii).

\ms\nt Suppose $n=1$, $s=2$, we have $L_1, L_2, R_1, R_2, R_3$ as follows.
\begin{enumerate}[(i)]
\item $L_1  = 12s^2 + 10s + 25 + (2s+1)(24 + 6(2s+1))  +21 = 384$,
\item $L_2 = 12s^2+16s+34 + (2s+1)(24 + 6(2s+1)) + 21 = 405$,
\item $R_1 =  21 + 3(6(2s+1)+1) + 72 + 41 + 6(12s+18) = 479$,
\item $R_2 = 21 + 3(6(2s+1)+1) + 72 + 39 + 6(12s+18) = 477$,
\item $R_3 = 21 + 3(6(2s+1)+1) + 72 + 54 + 8(12s+18) = 576$.
\end{enumerate}
Thus, condition (ii) is satisfied.

\ms\nt Suppose $n=1, s\ge 4$, we have $L_1, L_2, R_1, R_2, R_3$ as follows.
\begin{eqnarray*}
L_1  &=& 12s^2 + 10s + 25 + (2s+1)(24 + 6(2s+1))  + (s+2)(4s+1)/2\\
 &=&  (76s^2 + 173s+112)/2, \\
L_2 &=& 12s^2+16s+34 + (2s+1)(24 + 6(2s+1))  + (s+2)(4s+1)/2\\
 &=&  (76s^2 + 185s+130)/2, \\
R_1 &=& 21 + 3(6(2s+1)+1) + 72 + (35s+10)/2 + 2(3)(12s+18) \\
 &=& (251s + 454)/2,  \\
R_2 &=& 21 + 3(6(2s+1)+1) + 72 + (31s+10)/2 + 2(3)(12s+18) \\
 &=& (247s+454)/2,  \\
R_3 &=& 21 + 3(6(2s+1)+1) + 72 + 4s(3s+1) + 2(2s)(12s+18) \\
 &=& 60s^2 + 112s + 114.
\end{eqnarray*}
Clearly, $R_3 > L_2 > L_1 > R_1 > R_2$. Thus, condition (ii) is satisfied.

\ms\nt Suppose $n\ge2, s=2$, we have $L_1, L_2, R_1, R_2, R_3$ as follows.
\begin{eqnarray*}
L_1  &=&  15n+10 + s(8ns+4s+6n+4) + (2s+1)(2(8n+4)+(4n+2)(2s+1)) + 21\\
 &=& 239n+145,\\
L_2 &=& 21n+15 + s(8ns+4s+10n+6) + (2s+1)(2(8n+4)+(4n+2)(2s+1)) + 21\\
 &=& 253n+154,\\
R_1 &=& (2n+1)(4n+3) + (2n+1)((4n+2)(2s+1)+1)+ (4n+2)(8n+4) + 41 + \\
 && 2(3)(8n+4+(4n+2)(2s+1))\\
 &=& 80n^2+252n+147,\\
R_2 &=& (2n+1)(4n+3) + (2n+1)((4n+2)(2s+1)+1)+ (4n+2)(8n+4) +39 + \\
 && 2(3)(8n+4+(4n+2)(2s+1))\\
 &=& 80n^2+252n+145,\\
R_3 &=& (2n+1)(4n+3) + (2n+1)((4n+2)(2s+1)+1)+ (4n+2)(8n+4) + 54 + \\
 && 2(2s)(8n+4+(4n+2)(2s+1))\\
 &=& 80n^2+308n+188.
\end{eqnarray*}
Thus, condition (ii) is satisfied.

\ms\nt Suppose $n\ge 2, s\ge 4$, we have $L_1, L_2, R_1, R_2, R_3$ as follows.
\begin{eqnarray*}
L_1  &=&  15n+10 + s(8ns+4s+6n+4) + (2s+1)(2(8n+4)+(4n+2)(2s+1)) +\\
 && (s+2)(4s+1)/2\\
 &=& n(24s^2+54s+35) + 14s^2 + 65s/2 + 21,\\
L_2 &=& 21n+15 + s(8ns+4s+10n+6) + (2s+1)(2(8n+4)+(4n+2)(2s+1)) +\\
 && (s+2)(4s+1)/2\\
 &=& n(24s^2+58s+41) + 14s^2 + 69s/2 + 26,\\
R_1 &=& (2n+1)(4n+3) + (2n+1)((4n+2)(2s+1)+1)+ (4n+2)(8n+4) +\\
 && (35s+10)/2 + 2(3)(8n+4+(4n+2)(2s+1))\\
 &=& n^2(16s+48) + n(64s+124)+ 91s/2 + 55,\\
R_2 &=& (2n+1)(4n+3) + (2n+1)((4n+2)(2s+1)+1)+ (4n+2)(8n+4) +\\
 && (31s+10)/2 + 2(3)(8n+4+(4n+2)(2s+1))\\
 &=& n^2(16s+48) + n(64s+124) + 87s/2 + 55,\\
R_3 &=& (2n+1)(4n+3) + (2n+1)((4n+2)(2s+1)+1)+ (4n+2)(8n+4) +\\
 && 4s(3s+1) + 2(2s)(8n+4+(4n+2)(2s+1))\\
 &=& n^2(16s+48) + n(32s^2+64s+52) + 28s^2+32s+14.
\end{eqnarray*}
Clearly, $R_3>R_1>R_2$ and $R_3>L_2>L_1$. We only need to compare the values of $R_1, R_2, L_1, L_2$.
\begin{align*}
L_1-R_1 & = -16sn^2+24ns^2+14s^2-48n^2-10ns-13s-89n-34\\
& = 4(2ns+s+6n)(3s-2n-10)+2s^2+27s+6ns+151n-34,\\
R_2-L_2 & = 16sn^2-24ns^2+48n^2+6ns-14s^2+83n+9s+29\\
& = (8ns+24n+5s)(2n-3s+5)+s(s-4)+6s(n-2)+(22s-37)n+29.
\end{align*}
Thus, if $2n\le 3s-10$, then $L_2>L_1>R_1>R_2$; if $2n\ge 3s-5$, then $R_1>R_2>L_2>L_1$. Now, since $s$ is even, we need to deal with $2n\in\{3s-8, 3s-6\}$.

\begin{enumerate}[1.]
\item Suppose $2n=3s-8$. Thus, $2(L_1-R_1)=-26s^2+427s-892=-(s-13)(26s-89)+265$. So $L_1-R_1>0$ if and only if $3\le s\le 13$. Thus $L_2>L_1>R_1>R_2$ when $4\le s\le 12$.

Now, $2(R_2-L_2)= 14s^2-421s+930=(s-27)(14s-43)-231$. So $R_2-L_2<0$ if and only if $3\le s\le 27$. Thus $R_1>R_2>L_2>L_1$ when $s\ge 28$.

    Since $R_2+2s=R_1$, $2(R_1-L_2)=14s^2-417s+930=(s-27)(14s-39)-123$. So $R_1-L_2<0$ if and only if $3\le s\le 27$. Thus $L_2>R_1$ when $4\le s\le 26$.

Now, $2(L_1-R_2)=-26s^2+431s-892=-(s-14)(26s-67)+46$.  So $L_1-R_2>0$ if and only if $3\le s\le 14$. Thus, $R_2>L_1$ when $s\ge 16$.

    So, $L_1<R_2<R_1<L_2$ when $16\le s\le 26$.  When $s=14$, we have $L_2 = 97722 > R_1 =966440 > L_1=96635 > R_2=96612$.

\item Suppose $2n=3s-6$. Thus, $2(R_2-L_2)=62s^2-345s+424=(s-4)(62s-97)+36$. So $R_2-L_2>0$ for $s\ge 4$. Thus $R_1>R_2>L_2>L_1$.
\end{enumerate}
This completes the proof.
\end{proof}

\begin{theorem}\label{thm-C4n+2VC2s+1}  For $n, s\ge 1$, $\chi_{ltna}(C_{4n+2} \vee C_{2s+1}) = 5$.   \end{theorem}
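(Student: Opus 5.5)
We apply Corollary~\ref{cor-GVOddreg} with $G=C_{4n+2}$ and $H=C_{2s+1}$, which is $2$-regular. We keep the labeling $g$ and the matrix $B$ from the proof of Theorem~\ref{thm-C4n+2VO2s+1}. Thus $p=q=4n+2=2r$ and $m=2s+1$, and we take $\a_1,\a_2$ ($\a_1=24,\a_2=30$ when $n=1$; $\a_1=15n+9,\a_2=19n+13$ when $n\ge2$), $\b_1=s(8ns+4s+6n+4)+1$, $\b_2=s(8ns+4s+10n+6)+r+1$, and $\sum g(u_i)=(2n+1)(4n+3)$. For $h$ we take the $3$-coloring of $C_{2s+1}$ used in Theorem~\ref{thm-rK2VCmodd}. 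Its colors are $5s+6,7s+7,9s+8$ and its vertex-label sum is $(2s+1)(2s+2)$.

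Since $\chi(C_{4n+2}\vee C_{2s+1})=2+3=5$ and $\chi_{ltna}(C_{2s+1})=3=\chi(C_{2s+1})$, equality will follow once conditions (i) and (ii) of the corollary are checked.

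Condition (i) is the same inequality $\a_1+\b_1\ne\a_2+\b_2$ as in Theorem~\ref{thm-C4n+2VO2s+1}. For (ii) we compute $L_l=\a_l+\b_l+m(2p+2q+pm)+(2s+1)(2s+2)$ for $l=1,2$, and
\[R_k=\sum g(u_i)+p(pm+1)/2+p(p+q)+h^+_{tn}(v_k)+4(p+q+pm), \qquad k=1,2,3.\]
We treat $n=1$ and $n\ge2$ separately. For $n=1$, $L_1,L_2$ are quadratics in $s$ of order $38s^2$, while the $R_k$ are linear in $s$. Hence $L_2>L_1>R_3>R_2>R_1$ for all $s$; the small values of $s$ are checked directly.

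For $n\ge2$, the $L_l$ are of order $24ns^2$ and the $R_k$ are of order $16n^2s$. Moreover, $R_1<R_2<R_3$ differ only by $2s+1$, and $L_2-L_1=6n+2s+6+r$. As in Theorem~\ref{thm-rK2VCmodd}, we write $R_1-L_2$ and $R_3-L_1$ as multiples of $(2n-3s+c)$ plus lower-order terms, for suitable constants $c$. This shows that all three $R_k$ lie above $L_2$ when $2n\ge 3s+c_1$ and below $L_1$ when $2n\le 3s-c_2$, for small constants $c_1,c_2$.

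The main obstacle is the finite window of residual values $2n=3s+j$ with $j$ in a small range, respecting parity. For each such $j$ we substitute $n$ and obtain quadratics in $s$ for $R_k-L_l$. We then show these quadratics have no positive integer roots, either by factoring as $(s-a)(bs-c)+e$ with $e\ne0$ or by checking that the discriminant is not a perfect square. In the intermediate ranges of $s$ where the $R_k$ interleave with $L_1,L_2$, we determine the ordering explicitly, as in case 3 of Theorem~\ref{thm-rK2VCmodd}. Finitely many boundary values of $(n,s)$ are then checked numerically.
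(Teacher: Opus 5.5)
This is essentially the paper's proof: the same $g$, $B$ and $h$ are plugged into Corollary~\ref{cor-GVOddreg}, $n=1$ is done by direct comparison, and for $n\ge 2$ the paper decomposes $R_1-L_2$ around $2n-3s+3$ and $R_3-L_1$ around $2n-3s+7$, leaving the window $2n\in\{3s-6,3s-5,3s-4\}$, which it settles with quadratics in $s$ plus the explicit check at $(n,s)=(8,7)$. A few slips to correct when you write it out: for $n=1$ the leading term is $40s^2$ ($L_1=40s^2+88s+57$), and at $s=1$ the order is reversed ($R_3>R_2>R_1>L_2>L_1$); for $n\ge 2$ one has $L_2-L_1=4ns+6n+2s+5$; and for $2n=3s-6$ the quadratic $2(R_3-L_1)=22s^2-355s+522$ has discriminant $283^2$ (roots $14.5$ and $18/11$), so there you must show the roots are not integers, or use a sign argument, rather than rely on a non-square discriminant.
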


\begin{proof}  We keep the notation in proving Theorem~\ref{thm-C4n+2VO2s+1} except that $O_{2s+1}$ is now replaced by $H = C_{2s+1}$ so that $V(H) = \{v_j \mid 1\le j\le 2s+1\}$ and $E(H) = \{v_jv_{j+1} \mid 1\le j\le 2s\} \cup\{v_1v_{2s+1}\}$. Recall that $C_{4n+2}$ admits a bijective total labeling with $g$ such that $$\{g^+_{tn}(u_{4n}), g^+_{tn}(u_{4n-2}),\dots, g^+_{tn}(u_{4}), g^+_{tn}(u_2), g^+_{tn}(u_{4n+2})\}$$ forms an arithmetic sequence with first term $\a_1=15n+9$ and $d_1=-1$ when $n\ge 2$. Moreover, $$\{g^+_{tn}(u_{5}), g^+_{tn}(u_{7}),\dots, g^+_{tn}(u_{4n-1}), g^+_{tn}(u_{4n+1}), g^+_{tn}(u_{1}), g^+_{tn}(u_3)\}$$ forms an arithmetic sequence with first term $\a_2=19n+13$ and $d_2=-1$ when $n\ge 2$. When $n=1$, we get $\{g^+_{tn}(u_{4}), g^+_{tn}(u_{2}), g^+_{tn}(u_{6})\}=\{24,23,22\}$ with $\a_1=24$ and $d_1=-1$ (respectively, $\{g^+_{tn}(u_{5}), g^+_{tn}(u_{1}), g^+_{tn}(u_{3})\}=\{30, 29, 28\}$) with $\a_2=30$ and $d_2=-1$. Moreover, the sum of all the vertex labels under $g$ is $1+2+\cdots + (4n+2) = (2n+1)(4n+3)$.

\ms\nt  In~\cite[Theorem 4.5]{GLSY}, the authors show that $C_{2s+1}$ admits a local total neighborhood antimagic 3-coloring $h$ given by $h^+_{tn}(v_1) = 5s+6$, $h^+_{tn}(v_2) = 7s+7$ and $h^+_{tn}(v_3) = 9s+8$. Moreover, the sum of all the vertex labels under $h$ is $(2s+1)(2s+2)$. We now let $p = 2r = 4n+2=q$ and $m=2s+1$. Similar to the proof of Theorem~\ref{thm-C4n+2VO2s+1},  we have $\b_1 = s(8ns+4s+6n+4)+1$, $\b_2 = s(8ns+4s+10n+6)+r+1$ and $-d_1=-d_2 = 1$.

\ms\nt Since $H$ is 2-regular, similar to the proof of Theorem~\ref{thm-rK2VCmodd}, we only need to show condition (ii) of Corollary~\ref{cor-GVOddreg} holds.  Note that, $h^+_{tn}(v_3)=h^+_{tn}(v_2)+2s+1=h^+_{tn}(v_1)+4s+2$. So we let $R_3=R_1+4s+2$ and $R_2=R_1+2s+1$, where

\centerline{$R_1=\sum\limits^p_{i=1} g(u_i) + p(pm+1)/2 + p(p+q) + h^+_{tn}(v_1) + 2d_H(v_1)(p+q+pm)$.}

\nt For $n=1$, we have $L_1, L_2, R_1, R_2, R_3$ as follows.

\begin{align*}
L_1  &=  \a_1 + \b_1 + m(2p+2q+pm) + \sum^m_{j=1} h(v_j) \\
 & =  12s^2 + 10s + 25 + (2s+1)(24 + 6(2s+1))  + (2s+1)(2s+2)  = 40s^2+88s+57, \\
L_2 & =  \a_2+\b_2 + m(2p+2q+pm) +\sum^m_{j=1} h(v_j) \\
 & =   12s^2+16s+34 + (2s+1)(24 + 6(2s+1))  + (2s+1)(2s+2) =  40s^2+94s+66, \\
R_1 & = 21 + 3(6(2s+1)+1) + 72 + (5s+6) + 4(12s+18)  =  89s+192, \\
R_2 & = 91s+193, \\
R_3 &  = 93s+194.
\end{align*}
Clearly, for $s=1$, $R_3 > R_2 > R_1 > L_2 > L_1$, while for $s\ge 2$, $L_2 > L_1 > R_3 > R_2 > R_1$.  Thus, condition (ii) holds.

\ms\nt For $n\ge 2$, we have $L_1, L_2, R_1, R_2, R_3$ as follows.
\begin{eqnarray*}
L_1  &=&  \a_1 + \b_1 + m(2p+2q+pm) + \sum^m_{j=1} h(v_j) \\
 & =& 15n+10 + s(8ns+4s+6n+4)  + (2s+1)(4n+2)(2s+5) +  (2s+1)(2s+2)          \\
 & =& n(24s^2+54s+35)+16s^2+34s+22, \\
L_2 & =&  \a_2+\b_2 + m(2p+2q+pm) +\sum^m_{j=1} h(v_j) \\
 & = & 21n+15 + s(8ns+4s+10n+6) + (2s+1)(4n+2)(2s+5)  + (2s+1)(2s+2)                  \\
 & = & n(24s^2+58s+41) + 16s^2+36s+27, \\
R_1  & = & (2n+1)(4n+3) + (2n+1)((4n+2)(2s+1)+1) + (4n+2)(8n+4) + (5s+6) +  \\ &  &  4(4n+2)(2s+3)       \\
 & = & n^2(16s+48) + n(48s+100) + 25s+44, \\
R_2 & =&  n^2(16s+48) + n(48s+100) + 27s+45  \\
R_3 & =& n^2(16s+48)+n(48s+100) +29s+46.
\end{eqnarray*}
Now,
\begin{align*}
R_1 - L_2 & = 16n^2s+48n^2-24ns^2-16s^2-10ns+59n-11s+17\\
& = (8ns+3n+6s+10)(2n-3s+3)+(3n-2s)(5n-s+1)+(3n-3s+1)(9n+s)\\& \quad +3s^2+2s+18n-13\\
R_3-L_1 & = 16n^2s+48n^2-24ns^2-16s^2-6ns+65n-5s+24\\
& = (8ns+24n+5s)(2n-3s+7)-s^2-40s-103n+24.
\end{align*}
Note that $2n\ge 3s-3$ implies $3n-2s\ge n+s-3\ge 0$ and $3n-3s+1\ge n-2\ge 0$ (since $s\ge 1$ and $n\ge 2$).

\nt Thus, when $2n\ge 3s-3$, then $R_3>R_2>R_1>L_2>L_1$; when $2n\le 3s-7$, then $L_2>L_1>R_3>R_2>R_1$. So we need to deal with
$2n\in\{3s-4, 3s-5, 3s-6\}$. Since $n\ge 2$, $s\ge 3$. Thus, following we always assume $s\ge 3$

\begin{enumerate}[1.]
\item $2n=3s-6$. Note that $s$ is even. $2(R_1-L_2)=10s^2-361s+544=(s-35)(10s-11)+159>0$ if and only if $s\ge 35$. Hence, $R_3>R_2>R_1>L_2>L_1$ when $s\ge 36$.

$2(R_3-L_1)=22s^2-355s+522=(s-14)(22s-47)-136<0$ if and only if $s\le 14$. Hence, $L_2>L_1>R_3>R_2>R_1$ when $s\le 14$.

$2(R_1-L_1)=2(R_3-L_1-4s-2)=22s^2-363s+518=(s-15)(22s-33)+23>0$ if and only if $s\ge 15$.

$2(R_3-L_2)=2(R_1-L_2+4s+2)=10s^2-353s+548=(s-33)(10s-23)-211<0$ if and only if $s\le 33$. Hence $L_2>R_3>R_2>R_1>L_1$ when $16\le s\le 32$.

\item $2n=3s-5$. Note that $s$ is odd. $2(R_1-L_2)=34s^2-315s+339>0$ if and only if $s\ge 9$. Hence, $R_3>R_2>R_1>L_2>L_1$ when $s\ge 9$.

$2(R_3-L_1)=46s^2-305s+323=(s-5)(46s-75)-52<0$ if and only if $s\le 5$. Hence, $L_2>L_1>R_3>R_2>R_1$ when $s\le 5$.

So only $s=7$ (i.e., $n=8$) is the remaining case. Here we have $L_1=13756$, $L_2=14047$, $R_1=13947$, $R_2=13962$ and $R_3=13977$.

\item $2n=3s-4$. Note that $s$ is even. $2(R_1-L_2)=58s^2-253s+182=(s-4)(58s-21)+98>0$ if only if $s\ge 4$.
 Hence $R_3>R_2>R_1>L_2>L_1$.
\end{enumerate}

This completes the proof.
\end{proof}

\section{Conclusion and Open problems}

This paper investigates the local total neighborhood antimagic chromatic number of join graphs whose component graphs have distinct parity orders. By constructing suitable total labelings and arithmetic-magic matrices, we establish exact values of $\chi_{ltna}(G\vee H)$ for several classes of graphs, including
$rK_2$ or even cycle joined with odd-order null graphs, wheels or cycles.

\ms\nt The key contribution lies in introducing structured matrix constructions that systematically control induced vertex colors, allowing us to extend known results and unify various cases under a common framework. These techniques provide a flexible approach for analyzing more complex join graphs and may be applicable to broader classes of graph labelings.

\ms\nt We end the paper with the followings.
\begin{problem} For $n\ge 1, s\ge 2$, determine $\chi_{ltna}(C_{4n} \vee O_{2s+1})$, $\chi_{ltna}(C_{4n}\vee W_{2s})$ and $\chi_{ltna}(C_{4n} \vee C_{2s+1})$. \end{problem}

\begin{problem} Find necessary and sufficient conditions for $\chi_{ltna}(G\vee H) = \chi(G) + \chi(H)$. \end{problem}

\begin{problem} Determine tight upper/lower bounds of $\chi_{ltna}(G\vee H)$.   \end{problem}

\begin{problem} Find relationship between $\chi_{la}(G)$ and $\chi_{ltna}(G)$. \end{problem}

\end{document}